\newcommand{\Set}[2]{\left\{\, #1 \;\middle|\; #2 \,\right\}}
\newtheorem{theorem}{Theorem}[section]
\newtheorem{lemma}[theorem]{Lemma}
\theoremstyle{definition}
\newtheorem{proposition}{Proposition}
\newtheorem{corollary}[theorem]{Corollary}
\DeclareMathOperator{\size}{{size}}
\def\P{{\mathbf{P}}}
\def\NP{{\mathbf{NP}}}
\def\SSP{{\mathbf{SSP}}}
\def\SMP{{\mathbf{SMP}}}
\def\BSMP{{\mathbf{BSMP}}}
\def\BKP{{\mathbf{BKP}}}
\def\KP{{\mathbf{KP}}}
\def\AGP{{\mathbf{AGWP}}}
\def\dumb{{simple}}
\def\nondumb{{non-simple}}
\def\pbk{{polynomially bounded knapsack}}
\begin{document}

\title{Knapsack problems in products of groups}

\author[]{Elizaveta Frenkel, Andrey Nikolaev, and Alexander Ushakov}

\address{Elizaveta Frenkel, Moscow State University, GSP-1, Leninskie gory, 119991,
Moscow, Russia}
\email{lizzy.frenkel@gmail.com}

\address{Andrey Nikolaev, Stevens Institute of Technology, Hoboken, NJ, 07030 USA}
\email{anikolae@stevens.edu}

\address{Alexander Ushakov, Stevens Institute of Technology, Hoboken, NJ, 07030 USA}
\email{aushakov@stevens.edu}

\thanks{The work of the third author was partially supported by NSF grant DMS-1318716.}

\begin{abstract}
The classic knapsack and related problems have natural generalizations to arbitrary (non-commutative) groups, collectively called knapsack-type problems in groups. We study the effect of free and direct products on their time complexity. We show that free products in certain sense preserve time complexity of knapsack-type problems, while direct products may amplify it. Our methods allow to obtain complexity results for rational subset membership problem in amalgamated free products over finite subgroups.

\smallskip
\noindent
{\bf Keywords.}Subset sum problem,  knapsack problem, bounded subgroup membership problem, rational subset membership problem, free products, direct products,p hyperbolic groups, p nilpotent groups.

\smallskip
\noindent
{\bf 2010 Mathematics Subject Classification.} 03D15,  20F65, 20F10, 68Q45.
\end{abstract}

\maketitle

\section{Introduction}\label{sec:intro}

In~\cite{Miasnikov-Nikolaev-Ushakov:2014a}, the authors introduce a number of certain decision, search and optimization algorithmic problems in groups, such as the subset sum problem, the knapsack problem, and the bounded submonoid membership problem (see Section~\ref{sec:prelim} for definitions). These problems are collectively referred to as {\em knapsack-type} problems and deal with different generalizations of the classic knapsack and subset sum problems over~$\mathbb Z$ to the case of arbitrary groups. In the same work, the authors study time complexity of such problems for various classes of groups, for example for nilpotent, metabelian, hyperbolic groups. With that collection of results in mind, it is natural to ask what is the effect of group constructions on the complexity of knapsack-type problems, primarily the subset sum problem. In the present paper we address this question in its basic variation, for the case of free and direct products of groups.

Solutions to many algorithmic problems carry over from groups to their free products without much difficulty. It certainly is the case with classic decision problems in groups such as the word, conjugacy~\cite[for instance]{Lyndon-Schupp:2001} and membership~\cite{Mikhailova_68} problems. In some sense, the same expectations are satisfied with knapsack-type problems, albeit not in an entirely straightforward fashion. It turns out that knapsack-type problems such as the aforementioned subset sum problem, the bounded knapsack problem, and the bounded submonoid membership problem share a certain common ground that allows to approach these problems in a unified fashion, and to carry solutions of these problems over to free products. Thus, our research both presents certain known facts about these algorithmic problems in a new light, and widens the class of groups with known complexity of the knapsack-type problems. Our methods apply more generally, which allows us to establish in Section~\ref{sec:free_prod} complexity results for certain decision problems, including the rational subset membership problem, in free products of groups with finite amalgamated subgroups.

Algorithmic problems in a direct product of groups can be dramatically more complex than in either factor, as is the case with the membership problem, first shown in~\cite{Mikhailova}. By contrast, the word and conjugacy problems in direct products easily reduce to those in the factors. 
In Section~\ref{sec:direct_prod} we show that direct product does not preserve polynomial time subset sum problem (unless $\P=\NP$). Thus, the subset sum problem occupies an interesting position, exhibiting features of both word problem and membership problem; on the one hand, its decidability clearly carries immediately from factors to the direct product, while, on the other hand, its time complexity can increase dramatically. 

Below we provide basic definitions and some of the immediate properties of the problems mentioned above.

\subsection{Preliminaries}\label{sec:prelim}\label{sec:problems}

In this paper we follow terminology and notation introduced in \cite{Miasnikov-Nikolaev-Ushakov:2014a}.
For convenience, below we formulate the algorithmic problems mentioned in Section~\ref{sec:intro}. We collectively refer to these problems as {\it knapsack-type} problems in groups.

Elements in a group $G$ generated by a finite or countable set $X$ are given as words over the alphabet  $X \cup X^{-1}$. As we explain in the end of this section, the choice of a finite $X$ does not affect complexity of the problems we formulate below. Therefore, we omit the generating set from notation.

Consider the following decision problem. Given $g_1,\ldots,g_k,g\in G$, and $m$ that is either a unary positive integer or the symbol $\infty$, decide if
  \begin{equation} \label{eq:general-def}
  g = g_1^{\varepsilon_1} \ldots g_k^{\varepsilon_k}
  \end{equation}
for some integers $\varepsilon_1,\ldots,\varepsilon_k$ such that $0\le \varepsilon_j \le m$ for all $j=1,2,\ldots,k$. Depending on $m$, special cases of this problem are called:
\begin{description}
\item[($m=\infty$)] {\bf The knapsack problem $\KP(G)$\index{$\KP(G)$}}. We omit $\infty$ from notation, so the input of $\KP(G)$ is a tuple $g_1,\ldots,g_k,g\in G$.
\item[($m<\infty$)] {\bf The bounded knapsack problem $\BKP(G)$\index{$\BKP(G)$}}. The input of  $\BKP(G)$ is a tuple $g_1,\ldots,g_k,g\in G$, and a number $1^m$.
\item[($m=1$)] {\bf The subset sum  problem $\SSP(G)$\index{$\SSP(G)$}}. We omit $m=1$ from notation, so the input of  $\SSP(G)$ is a tuple $g_1,\ldots,g_k,g\in G$.
\end{description}
One may note that $\BKP(G)$ is $\P$-time equivalent to  $\SSP(G)$ (we recall the definition of $\P$-time reduction below), so it suffices for our purposes to consider only $\SSP$ in groups.

\medskip
The submonoid membership problem formulated below is equivalent to the knapsack problem in the classic (abelian) case, but in general it is a completely different problem that is of prime  interest in algebra.

Consider the following decision problem. Given $g_1,\ldots,g_k,g\in G$, and $m$ that is either a unary positive integer or the symbol $\infty$, decide if the following equality holds for some $g_{i_1}, \ldots, g_{i_s} \in \{g_1, \ldots, g_k\}$ and some integer $s$ such that $0\le s\le m$:
\begin{equation}\label{eq:SMP-def}
g = g_{i_1} \cdots g_{i_s}.
\end{equation}
Depending on $m$, special cases of this problem are called:
\begin{description}
\item[($m=\infty$)] {\bf Submonoid membership problem $\mathbf{SMP}(G)$\index{$\SMP(G)$}}. We omit $\infty$ from notation, so the input of $\SMP(G)$ is a tuple $g_1,\ldots,g_k,g\in G$.
\item[($m<\infty$)] {\bf Bounded submonoid membership problem $\BSMP(G)$\index{$\BSMP(G)$}}. The input of this problem is a tuple $g_1,\ldots,g_k,g\in G$ and a number $1^m$.
\end{description}

\medskip
The restriction of $\SMP$ to the case when the set of generators $\{g_1, \ldots,g_n\}$ is closed under inversion (so the submonoid is actually a subgroup of $G$) is  a well-known problem in group theory, the {\em uniform subgroup membership problem} in $G$ (see e.g. \cite{MSU_book:2011}).

In general, both $\SMP$ and $\KP$ can be undecidable in a group with decidable word problem, for instance, a group with decidable word problem, but undecidable membership in cyclic subgroups is constructed in \cite{Olsh-Sapir}. Groups with undecidable $\SMP$ or $\KP$ are not necessarily quite so hand-crafted; for example, famous Mikhailova construction~\cite{Mikhailova_68} shows that subgroup membership and, therefore, submonoid membership is undecidable in a direct product of a free group with itself, and recent works show that the knapsack problem is undecidable in integer unitriangular groups of sufficiently large size~\cite{Lohrey:2015} and, more broadly, large family of nilpotent groups of class at least $2$~\cite{Misch-Treyer}. {\em Bounded} versions of $\KP$ and $\SMP$ are at least always decidable in groups where the word problem is. 

Recall that a decision problem can be denoted as $(I,D)$, where $I$ is the set of all instances of the problem, and $D\subseteq I$ is the set of positive ones. A decision problem $(I_1,D_1)$ is  {\em $\P$-time  reducible}
to  a problem $(I_2,D_2)$ if there is a $\P$-time computable function
$f:I_1  \to I_2$  such that for any $u \in I_1$ one has $u \in D_1 \Longleftrightarrow f(u) \in D_2$.
Such reductions are usually called either {\em many-to-one} $\P$-time reductions or {\em Karp} reductions.
Since we mainly use this type of reduction we omit ``many-to-one'' from the name and call them $\P$-time reductions. We say that two problems are {\em $\P$-time equivalent} if each of them $\P$-time reduces to the other. Aside from many-to-one reductions, we use the so-called Cook reductions. That is, we say that a decision problem $(I_1,D_1)$ is  {\em $\P$-time Cook  reducible} to  a problem $(I_2,D_2)$ if there is an algorithm that solves problem $(I_1,D_1)$ using a polynomial number of calls to a subroutine for problem $(I_2,D_2)$, and polynomial time outside of those subroutine calls. Correspondingly, we say that two problems are {\em $\P$-time Cook equivalent} if each of them $\P$-time Cook reduces to the other. In our present work we are primarily interested in establishing whether certain problems are $\P$-time decidable or $\NP$-complete, and thus we make no special effort to use one of the two types of reduction over the other.

Finally we mention that for $\Pi\in \{\SSP, \KP, \BKP, \SMP, \BSMP\}$, the problem $\Pi(G,X_1)$ is $\P$-time (in fact, linear time) equivalent to $\Pi(G,X_2)$ for any finite generating sets $X_1,X_2$, i.e. the time complexity of $\Pi(G)$ does not depend on the choice of a finite generating set of a given group $G$. For this reason, we simply write $\Pi(G)$ instead of $\Pi(G,X)$, implying an arbitrary finite generating set.
We refer the reader to~\cite{Miasnikov-Nikolaev-Ushakov:2014a} for the proof of the above statement, further information concerning knapsack-type problems and their variations in groups, details regarding their algorithmic set-up, and the corresponding basic facts. Here we limit ourselves to mentioning that all of the above problems can be regarded as special cases of the uniform rational subset membership problem for $G$, with input given by a (non-deterministic) finite state automation and a group element. We take advantage of this viewpoint in Sections~\ref{sec:agp} and~\ref{sec:free_prod}.

\subsection{Results and open questions}\label{sub:results}
Primary goal of the present work is to answer the following basic questions about complexity of the knapsack problem ($\KP$) and the subset sum problem ($\SSP$) in free and direct products.
\begin{itemize}
\item[Q1.] Assuming $\SSP(G),\SSP(H)\in\P$, is it true that $\SSP(G*H)\in\P$?
\item[Q2.] Assuming $\KP(G),\KP(H)\in\P$, is it true that $\KP(G*H)\in\P$?
\item[Q3.] Assuming $\SSP(G),\SSP(H)\in\P$, is it true that $\SSP(G\times H)\in\P$?
\item[Q4.] Assuming $\KP(G),\KP(H)\in\P$, is it true that $\KP(G\times H)\in\P$?
\end{itemize}

We give a partial positive answer to the question Q1. To elaborate, in the context of studying properties of the subset sum problem in free products, it is natural to view this problem, along with the bounded knapsack and the bounded submonoid membership problems,
as special cases of a more general algorithmic problem formulated in terms of graphs labeled by group elements, which we call the {\em acyclic graph word problem} ($\AGP$), introduced in Section~\ref{sec:agp} below.

In the same section we also establish connection between $\SSP$, $\BKP$, $\BSMP$ and $\AGP$, as shown in Figure~\ref{fi:reductions},
\begin{figure}[h]
 \centering
 \includegraphics{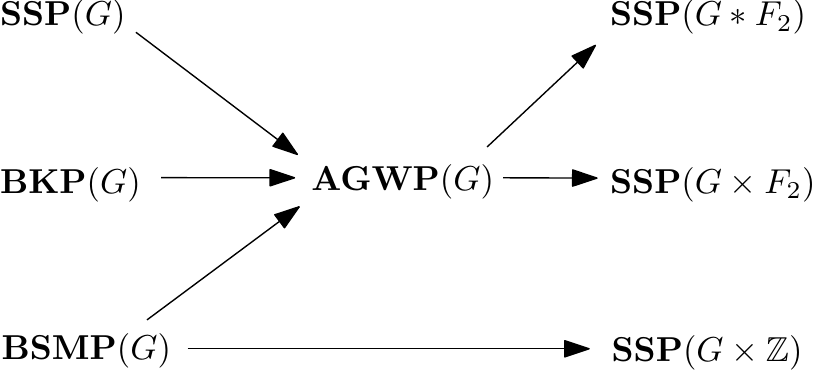}
 \caption{Connection between algorithmic problems. Arrows denote $\P$-time (Karp or Cook) reductions.}\label{fi:reductions}
\end{figure}
and show that $\AGP$ is $\P$-time solvable in all known groups where $\SSP$ is. Further, we show in 
Section~\ref{sec:free_prod} that $\AGP$ in free products with amalgamation over a finite subgroup is $\P$-time Cook reducible to $\AGP$ in the factors. 
As a consequence, we obtain that $\SSP$, $\BKP$, $\BSMP$ are polynomial time solvable in a wide class of groups. Tools used in Section~\ref{sec:free_prod} allow to produce similar complexity estimates for a wider class of problems, including rational subset membership problem.

As an answer to Q2, we show in Section~\ref{sec:knapsack} that the (unbounded) knapsack problem in a free product has a polynomial bound on length of solution if and only if $\KP(G)$ and $\KP(H)$ have such a bound. This condition allows a $\P$-time reduction of $\KP(G*H)$ to $\AGP(G*H)$, and, therefore, a $\P$-time solution of $\KP(G*H)$ when $\AGP(G)$, $\AGP(H)\in \P$.


We observe that the same is false for direct products, i.e. that $\AGP(G)\in\P$, $\AGP(H)\in\P$ does not imply $\AGP(G\times H)\in\P$ (unless $\P=\NP$). In Section~\ref{sec:direct_prod}
we prove a similar result for $\SSP(G\times H)$, which negatively resolves question Q3.


The following questions remain open.
\begin{itemize}
\item[OQ1.] Is there a group $G$ with $\SSP(G)\in\P$ and $\NP$-hard $\AGP(G)$ (cf. results shown in Figure~\ref{fi:reductions})?
\item[OQ2.] Let $F_2$ be a free group of rank $2$. Is $\SSP(F_2\times F_2)$ $\NP$-complete (cf. Proposition~\ref{pr:ssp_cross})?
\end{itemize}

\section{Acyclic graph word problem}\label{sec:agp}
Let $G$ be a group and $X=\{x_1,\ldots,x_n\}$ a generating set for $G$.

%

\medskip
\noindent
{\bf The acyclic graph word problem $\AGP(G,X)$\index{$\AGP(G,X)$}:}
Given an acyclic directed graph $\Gamma$ labeled by letters in $X\cup X^{-1}\cup \{\varepsilon\}$ with two marked vertices, $\alpha$ and $\omega$, decide whether there is an oriented path in $\Gamma$ from $\alpha$ to $\omega$ labeled by a word $w$ such that $w=1$ in~$G$.

\medskip
An immediate observation is that this problem, like all the problems introduced in Section~\ref{sec:prelim}, is a special case of the uniform membership problem in a rational subset of $G$. We elaborate on that in Section~\ref{sec:free_prod}.

Let graph $\Gamma$ have $n$ vertices and $m$ edges. Define $\size(\Gamma)$ to be $m+n$. Let total word length of labels of edges of $\Gamma$ be $l$. For a given instance of $\AGP(G,X)$, its {\em size} is the value $m+n+l$. With a slight abuse of terminology, we will also sometimes use labels that are words rather than letters in $X\cup X^{-1}\cup\{\varepsilon\}$. Note that given such a graph $\Gamma$ of size $m+n+l$, subdividing its edges we can obtain a graph $\Gamma'$ where edges are labeled by letters, with $m+n+l\le \size(\Gamma')\le (m+l)+(n+l)+l$. Therefore, $1/3\size(\Gamma')\le\size(\Gamma)\le \size(\Gamma')$, so such an abuse of terminology results in distorting the size of an instance of $\AGP(G,X)$ by a factor of at most $3$.

Note that by a standard argument, if $X_1$ and $X_2$ are two finite generating sets for a group $G$, the problems $\AGP(G,X_1)$ and $\AGP(G,X_2)$ are $\P$-time (in fact, linear time) equivalent. In this sense, the complexity  of $\AGP$ in a group $G$ does not depend on the choice of a finite generating set. In the sequel we write $\AGP(G)$ instead of $\AGP(G,X)$, implying an arbitrary finite generating set.
As shown in~\cite{Miasnikov-Nikolaev-Ushakov:2014a}, complexity of algorithmic problems in a group can change dramatically depending on a choice of infinite generating set. We do not consider infinite generating sets in the present work, and refer the reader to the above paper 
for details of treating those.

The above definition of $\AGP$ can be given for an arbitrary finitely generated monoid $G$, rather than a group. The input in such case is a pair $(\Gamma,w_0)$ consisting of a directed graph $\Gamma$ labeled by letters in $X\cup\{\varepsilon\}$, and a word $w_0$ in $X$. The problem asks to decide if a word $w$ such that $w=w_0$ in $G$ is readable as a label of a path in $\Gamma$ from $\alpha$ to $\omega$. Given the immediate linear equivalence of the two formulations in the case of a group, we use the same notation $\AGP(G)$ for this problem. Note that the problems introduced in Section~\ref{sec:prelim} (subset sum problem, knapsack problem, etc) also can be formulated for a monoid.


We make a note of the following obvious property of $\AGP$.

\begin{proposition}\label{pr:subgroup}
Let $G$ be a finitely generated monoid and $H\le G$ its finitely generated submonoid. Then $\AGP(H)$ is $\P$-time reducible to $\AGP(G)$. In particular,
\begin{enumerate}
\item If $\AGP(H)$ is $\NP$-hard then $\AGP(G)$ is $\NP$-hard.
\item If $\AGP(G)\in \P$ then $\AGP(H)\in \P$.
\end{enumerate}
\end{proposition}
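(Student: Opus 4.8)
The plan is to reduce an instance of $\AGP(H)$ to an instance of $\AGP(G)$ essentially by ``change of alphabet'': fix finite generating sets $Y=\{y_1,\dots,y_r\}$ for $H$ and $X=\{x_1,\dots,x_n\}$ for $G$. Since $H\le G$, each generator $y_i$ (and each $y_i^{-1}$, in the group case) is equal in $G$ to some fixed word $u_i=u_i(X)$ over $X\cup X^{-1}$; we may compute and hard-code these words once and for all, so their lengths are bounded by a constant depending only on $H$, $Y$, $X$, and the chosen embedding. This is the ``main obstacle,'' and it is a mild one: we are not asking for an algorithm that finds $u_i$, we simply note that such words exist and fix them as part of the description of the reduction (the reduction's correctness, not its running time, is what depends on the embedding).

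Given an instance $(\Gamma, \alpha, \omega)$ of $\AGP(H)$ with $\Gamma$ labeled over $Y\cup Y^{-1}\cup\{\varepsilon\}$, I would build $\Gamma'$ by replacing each edge of $\Gamma$ labeled by a letter $y$ with a word-labeled edge labeled $u(y)$ (the fixed word over $X\cup X^{-1}$ representing $y$ in $G$), and leaving $\varepsilon$-edges unchanged; the marked vertices $\alpha,\omega$ are kept. By the remark in the paper that word-labeled edges may be used at the cost of distorting instance size by a factor of at most $3$, this is a legitimate instance of $\AGP(G)$, and since each $u(y)$ has bounded length, $\size(\Gamma')=O(\size(\Gamma))$, so the map $(\Gamma,\alpha,\omega)\mapsto(\Gamma',\alpha,\omega)$ is computable in linear (hence polynomial) time.

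Correctness is then immediate from the definitions. Paths in $\Gamma$ from $\alpha$ to $\omega$ correspond bijectively to paths in $\Gamma'$ from $\alpha$ to $\omega$, and if a $\Gamma$-path spells a word $w$ over $Y\cup Y^{-1}\cup\{\varepsilon\}$, the corresponding $\Gamma'$-path spells the word $w'$ obtained from $w$ by substituting $u(y)$ for each letter $y$. By the choice of the $u_i$, we have $w'=w$ in $G$; moreover $w=1$ in $H$ if and only if $w=1$ in $G$, because $H$ is a submonoid (indeed a subgroup, in the group setting) of $G$ and the identity element of $H$ coincides with that of $G$. Hence $\Gamma$ has an $\alpha$--$\omega$ path labeled by a word trivial in $H$ if and only if $\Gamma'$ has an $\alpha$--$\omega$ path labeled by a word trivial in $G$, which is exactly the required equivalence of instances. (In the monoid formulation with input $(\Gamma,w_0)$, one additionally replaces $w_0$ by its image $w_0'$ under the same substitution; the argument is unchanged.) Finally, statements (1) and (2) are the standard consequences of the existence of such a $\P$-time Karp reduction.
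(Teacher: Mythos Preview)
Your argument is correct and is exactly the ``obvious'' rewriting-of-generators reduction that the paper has in mind; indeed, the paper states this proposition without proof, merely calling it an obvious property of $\AGP$, and your write-up supplies precisely the expected details (fix words $u_i$ over $X$ for the generators $y_i$ of $H$, relabel edges, use injectivity of $H\hookrightarrow G$ for correctness).
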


Methods used in~\cite{Miasnikov-Nikolaev-Ushakov:2014a} to treat $\SSP$ and $\BSMP$ in hyperbolic and nilpotent groups can be easily adjusted to treat $\AGP$ as well, as we see in the two following statements.
\begin{proposition}\label{pr:agp_nilp}
$\AGP(G)\in\P$ for every finitely generated virtually nilpotent group $G$.
\end{proposition}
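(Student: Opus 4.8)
The plan is to reduce to the torsion-free finitely generated nilpotent case, and there exploit a polynomial bound on the length of words that can be "spelled" as products, combined with the polynomial-time word problem. First I would observe, using Proposition~\ref{pr:subgroup} and the fact that a virtually nilpotent group $G$ has a finite-index normal subgroup $N$ that is finitely generated nilpotent, that it suffices to handle finitely generated nilpotent $G$: given an instance $(\Gamma,\alpha,\omega)$ of $\AGP(G)$, one can pass to a graph over $N$ by a standard covering/coset-enumeration construction (replace each vertex $v$ by pairs $(v, gN)$ with $g$ ranging over coset representatives, and relabel edges by the $N$-part of the generator's action on cosets), blowing up the size only by the constant factor $[G:N]$; a path closing up in $G$ corresponds to a path closing up in $N$ in the new graph. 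So assume $G$ is finitely generated nilpotent; passing to a torsion-free finite-index subgroup the same way, assume $G$ is torsion-free finitely generated nilpotent.

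The key step, borrowed from the treatment of $\SSP$ and $\BSMP$ in \cite{Miasnikov-Nikolaev-Ushakov:2014a}, is the distortion/length bound: in a finitely generated nilpotent group of class $c$, if a word $w$ over $X\cup X^{-1}$ of length $\ell$ represents the identity, it does so "cheaply", and more to the point, the growth function is polynomial, so there is a polynomial $p$ such that any element represented by a geodesic path in $\Gamma$ from $\alpha$ to $\omega$ that equals $1$ in $G$ has a representative of length at most $p(\size(\Gamma) + l)$. Since $\Gamma$ is acyclic, every $\alpha$-$\omega$ path has length at most $\size(\Gamma)$, hence reading along any such path yields a word of length at most $\size(\Gamma)+l$; the question is then simply whether \emph{some} such path yields a word equal to $1$. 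Because the group is nilpotent and the coordinates in a Mal'cev basis of any product of the generators along a path of bounded length are integers of polynomially bounded bit-size, I would process the acyclic graph by dynamic programming: topologically sort the vertices, and at each vertex $v$ maintain the (finite, polynomially sized) set $S_v$ of tuples of Mal'cev coordinates of words read along paths from $\alpha$ to $v$.

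The main obstacle is that $S_v$ could a priori be exponentially large, since distinct paths give distinct coordinate tuples. To control this I would not store all reachable elements but instead observe that $\AGP$ over a nilpotent group reduces to a system of ($\P$-time checkable) conditions on the "edge-usage vector": fix a topological order, and for each vertex record not the set of elements but, working modulo successive terms of the lower central series $\gamma_1 G \ge \gamma_2 G \ge \dots$, solve layer by layer — in each quotient $\gamma_i G/\gamma_{i+1}G$, which is a finitely generated abelian group, the existence of a closing path becomes an integer-programming feasibility question over a bounded number of variables (the structure imposed by the graph), decidable in $\P$ by Lenstra's algorithm since the dimension is fixed by the nilpotency class and rank. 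Concretely, one peels off the abelianization $G/\gamma_2 G$ first: $\AGP(G)$ has a solution only along paths whose label sums to $0$ in the free abelian group $G^{\mathrm{ab}}$, and the set of such paths in an acyclic graph is itself describable by a small linear system; then recurse into $\gamma_2 G$ with the induced graph. I expect the bookkeeping of this layer-by-layer "lifting through the central series while keeping the graph data polynomial" to be the technically delicate part, exactly as in the $\SSP$/$\BSMP$ arguments of \cite{Miasnikov-Nikolaev-Ushakov:2014a}, and I would cite those methods rather than redo them in full.
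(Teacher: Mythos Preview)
Your proposal contains the right ingredient but you abandon it exactly at the moment it would succeed. The dynamic programming you describe---maintain at each vertex $v$ the set $S_v$ of group elements readable along $\alpha$-to-$v$ paths---is precisely the argument the paper uses (packaged as Theorem~\ref{th:kill_z} with $G$ trivial). Your worry that $S_v$ ``could a priori be exponentially large'' is unfounded: since $\Gamma$ is acyclic, every path has length at most $|E|$, so every element of $S_v$ lies in the ball $B_{|E|}$ in the Cayley graph of $N$, and by polynomial growth (Wolf) $|B_{|E|}|$ is polynomial in $|E|$. Hence $|S_v|$ is polynomially bounded, the product graph on $V(\Gamma)\times B_{|E|}$ has polynomial size, and the straightforward reachability computation finishes the proof. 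You even mention polynomial growth, but use it only to bound the \emph{bit-size} of Mal'cev coordinates rather than the \emph{number} of distinct elements, which is the point.

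The layer-by-layer Lenstra argument you retreat to is both unnecessary and, as stated, broken. Lenstra's algorithm needs the number of \emph{variables} to be bounded, but your ``edge-usage vector'' has one coordinate per edge of $\Gamma$, so the dimension grows with the input; the quantities fixed by the nilpotency class and rank are the number of abelian-quotient \emph{constraints}, not the number of unknowns. Moreover, even in the bottom abelian layer the feasible set is not a lattice polytope but the (generally complicated) set of $0/1$ indicator vectors of $\alpha$--$\omega$ paths, so integer programming is the wrong tool. The correct fix is simply to trust the dynamic programming: the ball bound already controls $|S_v|$, no recursion through the lower central series is needed.
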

\begin{proof} The statement follows from Theorem~\ref{th:kill_z} below.
\end{proof}

\begin{proposition}\label{pr:agp_hyp}
$\AGP(G)\in\P$ for every finitely generated hyperbolic group $G$.
\end{proposition}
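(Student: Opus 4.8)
The plan is to reduce $\AGP(G)$ for a hyperbolic group $G$ to a polynomial-size instance of a reachability problem that can be solved by dynamic programming, exploiting the fact that hyperbolic groups have a linear-time solvable word problem and, more importantly, that geodesic words representing a fixed element form a regular language recognized by a finite automaton of size polynomial in the word length. Concretely, given an instance $(\Gamma,\alpha,\omega)$ of $\AGP(G)$ of size $N$, I first observe that any label $w$ of a path from $\alpha$ to $\omega$ has length at most $N$ (the graph is acyclic, so a simple path uses each edge at most once; and we may restrict attention to simple paths). The key point is then that in a hyperbolic group there is a constant $C=C(G)$ such that if $w=1$ in $G$ then $w$ bounds a van Kampen diagram, and more usefully, the equality $w=1$ can be verified by tracking the "state" of $w$ as a quasigeodesic excursion from the origin: one processes the word letter by letter, maintaining a bounded amount of information about the current position in the Cayley graph up to the relevant scale.

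The cleaner route, which I would actually carry out, is via the automaticity of $G$. Hyperbolic groups are automatic, so there is a finite-state automaton $W$ (the word acceptor) over $X\cup X^{-1}$ and, crucially, a "multiplier" structure; but for $\AGP$ what I need is simpler: I want to decide whether some path label $w$ in $\Gamma$ satisfies $w=_G 1$. Equivalently, I build the product graph $\Gamma \times Q$, where $Q$ is the state set of an automaton that reads a word $u$ over $X\cup X^{-1}$ and tracks the shortlex normal form of the group element represented by the prefix read so far, \emph{truncated} to a ball of radius $\delta'$ around... — this does not quite work because the element can travel far from the origin. Instead I use the standard fact for hyperbolic groups: one can compute, in time polynomial in $|u|$, the shortlex normal form of the element represented by $u$, and hence decide $u=_G1$; but to run this over all exponentially many path labels simultaneously I pass to the asynchronous/synchronous automatic structure. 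Since $G$ is automatic, the language $\{u : u =_G 1\}$ is \emph{not} regular in general — rather, it is the language accepted by combining the word acceptor with the equality multiplier. The right statement is: there is a nondeterministic finite automaton $\mathcal{A}$ of size depending only on $G$ that, reading a word $u$, nondeterministically guesses the shortlex normal form $v$ of the represented element interleaved with $u$ and verifies consistency via the multiplier automaton. Taking the product of $\Gamma$ with $\mathcal{A}$ and with the shortlex word-acceptor gives a graph of size polynomial in $N$ in which we ask for a path from a start configuration to an accept configuration representing the identity; this is plain graph reachability, solvable in polynomial (indeed near-linear) time.

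The steps in order are therefore: (1) reduce to simple paths and bound path-label length by $N$; (2) invoke the automatic structure of hyperbolic groups to obtain a fixed finite automaton $\mathcal{A}$ (built from the word-acceptor and the equality-recognizer multiplier automaton) that certifies, via an interleaved guessed normal form, that a word equals $1$ in $G$; (3) form the product of $\Gamma$ with $\mathcal{A}$, obtaining a directed graph of polynomial size together with designated initial and accepting vertices; (4) run breadth-first search / reachability on this product graph and accept iff an accepting vertex is reachable. Correctness follows because a path from $\alpha$ to $\omega$ in $\Gamma$ whose label $w$ satisfies $w=_G1$ lifts to an accepting run of $\mathcal{A}$ on $w$, and conversely; polynomiality follows since $|\mathcal{A}|$ is an absolute constant (depending on $G$ only) and reachability is polynomial.

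The main obstacle, and the place requiring genuine care, is step (2): making precise the finite automaton that certifies $w=_G 1$ while only using bounded memory. The subtlety is that the element represented by a prefix of $w$ can be arbitrarily far from the identity, so one cannot literally store its position; the automatic-structure machinery circumvents this by having $\mathcal{A}$ simultaneously \emph{emit} a guessed normal form letter-by-letter so that the fellow-traveller property keeps the two reading heads boundedly apart, which is exactly where hyperbolicity (via automaticity, or directly via a Dehn presentation and bounded-width van Kampen diagrams) is used. An alternative and perhaps more self-contained implementation of step (2), which I would mention as a fallback, is to use Dehn's algorithm: $w=_G1$ iff $w$ reduces to the empty word by repeatedly replacing a subword that is more than half of a defining relator; one can encode "one Dehn reduction step applied to a path label" as a local modification of $\Gamma$, but iterating this inside a graph over exponentially many labels is awkward, so the automaton formulation is preferable. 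In the write-up I expect the cleanest citation-backed argument is: hyperbolic $\Rightarrow$ automatic $\Rightarrow$ the relevant language is accepted by a polynomial-size NFA after taking the product with $\Gamma$ $\Rightarrow$ reachability, mirroring exactly how $\SSP$ and $\BSMP$ are handled for hyperbolic groups in \cite{Miasnikov-Nikolaev-Ushakov:2014a}.
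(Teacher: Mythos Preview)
The paper's own proof is a single-line citation to \cite[Proposition~5.5]{Miasnikov-Nikolaev-Ushakov:2014a}, so your concluding remark---that the cleanest route is simply to invoke that reference---is precisely what the paper does. The bulk of your proposal, however, is an attempted self-contained argument, and step~(2) of that argument has a genuine gap.

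You claim there is a nondeterministic finite automaton $\mathcal{A}$, of size depending only on $G$, that (after ``guessing'' an interleaved normal form) accepts exactly those words $u$ with $u=_G 1$. No such constant-size device exists: the word-problem language of any infinite group is not regular, and the interleaving idea does not repair this. If $\mathcal{A}$ is the equality-recognizer of the automatic structure and you feed it the padded pair $(u,\epsilon)$---the normal form of $1$ being $\epsilon$---it accepts only when $u$ is itself the normal form of $1$, i.e.\ when $u=\epsilon$. If instead you try to guess the normal forms $v_0,v_1,\ldots,v_n$ of all prefixes and verify each consecutive pair $(v_{i-1},v_i)$ against the appropriate multiplier, both strings must be simultaneously available to a synchronous two-tape reader, and a constant-size machine cannot buffer the $\Theta(n)$-length string $v_{i-1}$ while waiting to read $v_i$. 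The fellow-traveller property bounds the Cayley-graph distance between $v_{i-1}(t)$ and $v_i(t)$, not the memory required to store either of them. Consequently the product $\Gamma\times\mathcal{A}\times W$ does not have the semantics you claim, and the reachability reduction in step~(4) is unjustified.

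Ironically, the ``fallback'' you dismiss as awkward---performing Dehn reductions on the graph~$\Gamma$ itself rather than on a single word---is much closer to a working self-contained argument: saturate $\Gamma$ by adding, for every bounded-length path whose label is more than half of a defining relator, a shortcut edge labelled by the (strictly shorter) complementary piece; since edge labels then range over a fixed finite set and there are $O(N^2)$ ordered vertex pairs, the saturation stabilises after polynomially many steps, and one finishes by testing for an $\epsilon$-labelled path from $\alpha$ to $\omega$.
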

\begin{proof}
The statement follows immediately from~\cite[Proposition 5.5]{Miasnikov-Nikolaev-Ushakov:2014a}.
\end{proof}
Here we would also like to note that since the word problem straightforwardly reduces to $\AGP$, an obvious prerequisite for $\AGP(G)$ to belong to $\NP$ is to have a polynomial time word problem. Therefore, in the context of investigating time complexity of $\AGP$ we are primarily interested in such groups.

We also note that adding or eliminating a direct factor $\mathbb Z$ or, more generally, a finitely generated virtually nilpotent group, does not change complexity of $\AGP$.
%
%
%
%

\begin{theorem}\label{th:kill_z}
Let $G$ be a finitely generated monoid and $N$ a finitely generated virtually nilpotent group. Then $\AGP(G)$ and $\AGP(G\times N)$ are $\P$-time equivalent.
\end{theorem}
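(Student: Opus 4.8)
The plan is to establish the two reductions making up the equivalence separately, each as a (many-to-one) $\P$-time reduction. One direction is immediate: $G$, regarded as a monoid, is isomorphic to the finitely generated submonoid $G\times\{\iden\}$ of $G\times N$, so $\AGP(G)$ reduces to $\AGP(G\times N)$ by Proposition~\ref{pr:subgroup}. All the content lies in the reduction $\AGP(G\times N)\to\AGP(G)$, and the idea is the familiar ``track the counter'' construction, carried out in the nilpotent setting: one builds a new graph whose vertices remember, along every oriented path, the current value of the $N$-coordinate, exploiting that in a virtually nilpotent group the values of words of bounded length have small canonical names and there are only polynomially many of them.

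The crux, and the only step requiring real work, is the following. \emph{There is a faithful representation $\psi\colon N\to\mathrm{GL}_D(\mathbb Z)$ and a polynomial $p$ such that every entry of $\psi(\bar w)$ has absolute value at most $p(\ell)$ whenever $w$ is a word of length at most $\ell$ over a fixed finite generating set $Y$ of $N$.} I would produce $\psi$ as follows. The group $N$ contains a finite-index normal subgroup $N_0$ that is torsion-free nilpotent: a finite-index nilpotent subgroup of $N$ contains a finite-index torsion-free subgroup (the torsion of a finitely generated nilpotent group is a finite subgroup, and finitely generated nilpotent groups are residually finite), and one then replaces it by its normal core in $N$. Now $N_0$ embeds by some $\rho$ into the group of $d\times d$ integer unitriangular matrices, and I set $\psi=\bigl(\mathrm{Ind}_{N_0}^{N}\rho\bigr)\oplus(\lambda\circ\pi)$, where $\pi\colon N\to N/N_0$ is the quotient map and $\lambda$ is the regular permutation representation of the finite group $N/N_0$; the second summand restores faithfulness, since $\ker\psi\le N_0\cap\ker(\mathrm{Ind}_{N_0}^{N}\rho)\le\ker\rho=\{\iden\}$. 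In the block decomposition of $\mathrm{Ind}_{N_0}^{N}\rho$ afforded by a transversal of $N_0$ in $N$, each $\psi(y)$ with $y\in Y\cup Y^{-1}$ is block-monomial, with exactly one nonzero $d\times d$ block per block-row and block-column, that block lying in $\rho(N_0)$ and hence unitriangular with entries bounded by a constant $c_0$ (together with the $0/1$ permutation block of $\lambda\circ\pi$). Therefore, for $|w|\le\ell$ the matrix $\psi(\bar w)$ is again block-monomial and each of its nonzero blocks is a product of at most $\ell$ unitriangular matrices with entries $\le c_0$; a standard estimate on such products (each entry is a sum of at most $(\ell+d)^{d-1}$ monomials, each a product of at most $d-1$ off-diagonal factors) yields $p(\ell)=\bigl(c_0(\ell+d)\bigr)^{d-1}$.

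With the lemma in hand the reduction is routine. Fix finite generating sets $X$ of $G$ and $Y$ of $N$; by generating-set independence of $\AGP$ I may assume an input graph $\Gamma$ of $\AGP(G\times N)$, with marked vertices $\alpha,\omega$, is labeled over $X\cup X^{-1}\cup Y\cup Y^{-1}\cup\{\varepsilon\}$. Put $L=\size(\Gamma)$; since $\Gamma$ is acyclic, every oriented path in it is simple, hence uses at most $L$ edges, so the $Y$-part of the label of any $\alpha$--$\omega$ path is a word of length at most $L$. Let $R$ be the set of matrices $\psi(\bar u)$ for words $u$ over $Y\cup Y^{-1}$ of length at most $L$; by the lemma $|R|\le\bigl(2p(L)+1\bigr)^{D^2}$, which is polynomial in $L$, and $R$ is computable in time polynomial in $L$ by enumerating these matrices level by level (matrices being multiplied and compared in polynomial time). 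I build $\Gamma'$ on vertex set $V(\Gamma)\times R$: an edge $u\to v$ of $\Gamma$ with label $a\in X\cup X^{-1}\cup\{\varepsilon\}$ yields edges $(u,M)\to(v,M)$ with label $a$ for every $M\in R$, while an edge with label $y\in Y\cup Y^{-1}$ yields edges $(u,M)\to(v,M\psi(y))$ with label $\varepsilon$ for every $M\in R$ with $M\psi(y)\in R$; the marked vertices are $\alpha'=(\alpha,I)$ and $\omega'=(\omega,I)$, where $I$ is the identity matrix. Then $\Gamma'$ is acyclic (its edges project onto edges of $\Gamma$), has size polynomial in $L$, and is computable in polynomial time. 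Correctness is a direct tracking argument: a path from $\alpha$ to $\omega$ in $\Gamma$ whose label $w=(w_X,w_Y)$ equals $\iden$ in $G\times N$ has $|w_Y|\le L$, so the $\psi$-images of all prefixes of $w_Y$ lie in $R$ and the path lifts to a path from $\alpha'$ to $\omega'$ in $\Gamma'$ labeled by $w_X=\iden$ in $G$; conversely, a path from $\alpha'$ to $\omega'$ in $\Gamma'$ labeled by a word equal to $\iden$ in $G$ projects to a path from $\alpha$ to $\omega$ in $\Gamma$ with label $(w_X,w_Y)$, $w_X=\iden$ in $G$ and $\psi(\bar w_Y)=I$, hence $w_Y=\iden$ in $N$ by faithfulness of $\psi$. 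When $G$ is only a monoid and the input carries a target word $w_0=(u_0,v_0)$, I keep $u_0$ as the target word of the output instance and set $\omega'=(\omega,\psi(\bar v_0))$ if $\psi(\bar v_0)\in R$, outputting a fixed negative instance otherwise (correct, since then $v_0$ is not a product of at most $L$ elements of $Y\cup Y^{-1}$ and so no $\alpha$--$\omega$ path can succeed). This is a $\P$-time (Karp) reduction, which together with the first direction gives the asserted equivalence.

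The main obstacle is precisely the key lemma --- exhibiting a faithful integral representation of $N$ under which balls grow only polynomially --- and it is here that virtual nilpotency is used in an essential way, via the torsion-free nilpotent finite-index subgroup and its unitriangular embedding; the one delicate point is keeping the growth of matrix entries polynomial rather than exponential, which is exactly what the block-monomial structure with unitriangular blocks of the induced representation buys. This is the ``polynomial bounds on matrix entries'' technique already used for $\SSP$ and $\BSMP$ in nilpotent groups in~\cite{Miasnikov-Nikolaev-Ushakov:2014a}, here repackaged to cover the virtually nilpotent case and to feed the graph construction above; the remainder of the argument is bookkeeping.
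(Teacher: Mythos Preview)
Your proof is correct and follows the same high-level strategy as the paper: the nontrivial direction is handled by a product-graph construction in which vertices of $\Gamma$ are paired with the running $N$-coordinate, and the key point is that only polynomially many $N$-states are reachable along paths of length at most $\size(\Gamma)$.

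Where you diverge from the paper is in how you certify this polynomial bound. The paper simply takes $V'\subseteq V\times N$, observes $|V'|\le |V|\cdot B_{|E|}$ with $B_r$ the ball of radius $r$ in the Cayley graph of $N$, and invokes the polynomial growth of virtually nilpotent groups~\cite{Wolf} together with the polynomial-time word problem~\cite{Holt-Rees:2001} to build $\Gamma'$ directly. You instead construct an explicit faithful integral representation $\psi$ of $N$ with polynomially bounded entries (via a torsion-free nilpotent normal finite-index subgroup, its unitriangular embedding, and induction to $N$), and then encode the $N$-state by a matrix in a polynomially bounded box $R$. Your route is more self-contained---it effectively re-proves polynomial growth and the efficient word problem for $N$ in one stroke---at the cost of the representation-theoretic detour; the paper's route is shorter but leans on the cited black boxes. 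Either way the reduction is the same product-graph idea, and the answers agree. (Incidentally, your added summand $\lambda\circ\pi$ is harmless but unnecessary: once $N_0$ is nontrivial, $\mathrm{Ind}_{N_0}^{N}\rho$ is already faithful, since any $n\notin N_0$ permutes the coset blocks nontrivially.)
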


\begin{proof}
We only have to show the reduction of $\AGP(G\times N)$ to $\AGP(G)$, since the other direction is immediate by Proposition~\ref{pr:subgroup}.

Since the complexity of $\AGP$ in a given monoid does not depend on a finite generating set, we assume that the group $G\times N$ is generated by finitely many elements $(g_1,1_N)$, $(g_2,1_N)$, \ldots, and $(1_G,h_1)$, $(1_G,h_2)$, \ldots. Consider an arbitrary instance $(\Gamma,\alpha,\omega, (\hat g,\hat h))$ of $\AGP(G\times N)$, where $\Gamma = (V,E)$.
Consider a graph $\Gamma^\ast = (V^\ast,E^\ast)$, where $V^\ast = V\times N$ and
\[
E^\ast = \Set{(v,h) \stackrel{g}{\to} (v',hh')}{\mbox{for every }(v,h) \in V^\ast \mbox{ and } v \stackrel{(g,h')}{\to} v' \in E}.
\]
where $(g,h')$ denotes an element of $G\times N$, with $g\in G$, $h'\in \mathbb \{1_N,h_i,h_i^{-1}\}$.
Let $\Gamma'= (V', E')$ be the connected component of $\Gamma^\ast$ containing $(\alpha,1_N) \in V^\ast$
(or the subgraph of $\Gamma^\ast$ induced by all vertices in $\Gamma^\ast$ that can be reached from $(\alpha,1_N)$).
It is easy to see that
\[|V'| \le |V| \cdot B_{|E|},\]
where $B_{|E|}$ is the ball of radius $|E|$ in the Cayley graph of $N$ relative to generators $\{h_i\}$. Since the group $N$ has polynomial growth~\cite{Wolf} and polynomial time decidable word problem (in fact, real time by~\cite{Holt-Rees:2001}), the graph $\Gamma'$ can be constructed in a straightforward way in polynomial time.
Finally, it follows from the construction of $\Gamma'$ that $(\Gamma,\alpha,\omega, (\hat g,\hat h))$ is a positive instance
of $\AGP(G\times N)$ if and only if
$(\Gamma',(\alpha,1_N),(\omega,\hat h), \hat g)$ is a positive instance of $\AGP(G)$.
\end{proof}

The above statement and Proposition~\ref{pr:agp_hyp} provide the following corollary.
\begin{corollary}\label{co:F2xZ}
$\AGP(F_2\times N) \in \P$ for every finitely generated virtually nilpotent group $N$.
\end{corollary}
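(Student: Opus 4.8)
The corollary asserts $\AGP(F_2 \times N) \in \P$ for finitely generated virtually nilpotent $N$. The plan is to combine the two preceding results exactly as the sentence before the corollary suggests.

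First I would invoke Proposition~\ref{pr:agp_hyp}: the free group $F_2$ of rank $2$ is a hyperbolic group (indeed, free groups are the prototypical hyperbolic groups, being quasi-isometric to trees), and it is finitely generated, so $\AGP(F_2) \in \P$. Then I would apply Theorem~\ref{th:kill_z} with $G = F_2$ (which is in particular a finitely generated monoid) and the given finitely generated virtually nilpotent group $N$: the theorem states that $\AGP(F_2)$ and $\AGP(F_2 \times N)$ are $\P$-time equivalent. Since $\AGP(F_2) \in \P$, $\P$-time equivalence immediately yields $\AGP(F_2 \times N) \in \P$, which is the claim.

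There is essentially no obstacle here; the corollary is a direct specialization of Theorem~\ref{th:kill_z} to the case $G = F_2$, using Proposition~\ref{pr:agp_hyp} to supply the hypothesis $\AGP(F_2) \in \P$. The only thing worth noting is that one should state explicitly that $F_2$ is hyperbolic (so that Proposition~\ref{pr:agp_hyp} applies) and finitely generated (so that it qualifies as the monoid $G$ in Theorem~\ref{th:kill_z}); both facts are standard and require no argument. If one wanted, one could also observe that Corollary~\ref{co:F2xZ} in turn specializes to $N = \mathbb{Z}$, recovering $\AGP(F_2 \times \mathbb{Z}) \in \P$, which explains the label \texttt{co:F2xZ}.

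\begin{proof}
The free group $F_2$ is a finitely generated hyperbolic group, so $\AGP(F_2) \in \P$ by Proposition~\ref{pr:agp_hyp}. Applying Theorem~\ref{th:kill_z} with $G = F_2$ and the given finitely generated virtually nilpotent group $N$, we conclude that $\AGP(F_2 \times N)$ is $\P$-time equivalent to $\AGP(F_2)$, hence $\AGP(F_2 \times N) \in \P$.
\end{proof}
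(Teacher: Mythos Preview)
Your proposal is correct and follows exactly the approach indicated in the paper: the sentence preceding the corollary says ``The above statement and Proposition~\ref{pr:agp_hyp} provide the following corollary,'' and you have spelled this out precisely---$F_2$ is hyperbolic so $\AGP(F_2)\in\P$ by Proposition~\ref{pr:agp_hyp}, and then Theorem~\ref{th:kill_z} with $G=F_2$ gives the result.
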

Here, and everywhere below, $F_2$ denotes the free group of rank $2$.

\subsection{Connection between acyclic graph word problem and knapsack-type problems}
The subset sum problem ($\SSP$), the bounded knapsack problem ($\BKP$), and the bounded submonoid membership problem ($\BSMP$) in a monoid $G$ reduce easily to the acyclic graph word problem ($\AGP$).
\begin{proposition}\label{pr:reduction_to_agp} Let $G$ be a finitely generated monoid.
$\SSP(G),\ \BKP(G),\ \BSMP(G)$ are $\P$-time reducible to $\AGP(G)$.
\end{proposition}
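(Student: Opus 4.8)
The plan is to encode each of the three problems as an instance of $\AGP(G)$ by building, in polynomial time, a labeled acyclic directed graph whose $\alpha$-to-$\omega$ paths correspond precisely to the choices made in a candidate solution, with the label of each path equal to $1$ in $G$ exactly when that choice witnesses a positive instance. In each case the given target element $g$ is absorbed into the labels (using $g^{-1}$), so that ``reads a word equal to $g$'' becomes ``reads a word equal to $1$.''

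First I would handle $\SSP(G)$. Given $g_1,\ldots,g_k,g\in G$, build a graph with vertices $v_0,v_1,\ldots,v_k$ and, for each $j$, two parallel edges from $v_{j-1}$ to $v_j$, one labeled $g_j$ (encoding $\varepsilon_j=1$) and one labeled $\varepsilon$ (encoding $\varepsilon_j=0$); then append one final edge from $v_k$ to a new vertex $\omega$ labeled $g^{-1}$, and set $\alpha=v_0$. This graph is acyclic, has size linear in the input, and is constructed in polynomial time (after rewriting the word labels $g_j$, $g^{-1}$ over $X\cup X^{-1}$, which by the size discussion in Section~\ref{sec:agp} costs at most a constant factor). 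A path from $\alpha$ to $\omega$ reads $g_1^{\varepsilon_1}\cdots g_k^{\varepsilon_k}g^{-1}$ for some $\varepsilon_j\in\{0,1\}$, and this equals $1$ in $G$ iff $g=g_1^{\varepsilon_1}\cdots g_k^{\varepsilon_k}$; so this is a Karp reduction $\SSP(G)\le_{\P}\AGP(G)$.

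Next, $\BKP(G)$: given $g_1,\ldots,g_k,g$ and $1^m$, replace the two parallel edges in stage $j$ by a small gadget with $m+1$ parallel edges from $v_{j-1}$ to $v_j$ labeled $\varepsilon, g_j, g_j^2,\ldots,g_j^m$ (or, to keep labels short, a path of length $m$ through fresh intermediate vertices where each edge is labeled either $g_j$ or $\varepsilon$, so that the word read along stage $j$ is $g_j^{\varepsilon_j}$ with $0\le\varepsilon_j\le m$). Since $m$ is given in unary, the total size is polynomial in the input, so this is again a $\P$-time reduction. For $\BSMP(G)$: build a graph with vertices $u_0,u_1,\ldots,u_m$, and for every $i$ and every $j\in\{1,\ldots,k\}$ an edge $u_{i-1}\xrightarrow{g_j}u_i$, together with an edge $u_{i-1}\xrightarrow{\varepsilon}u_i$ (to allow $s<m$), then a final edge $u_m\xrightarrow{g^{-1}}\omega$, with $\alpha=u_0$; a path reads $g_{i_1}\cdots g_{i_s}g^{-1}$ for some $s\le m$ and some $g_{i_t}\in\{g_1,\ldots,g_k\}$, which is $1$ in $G$ iff $g=g_{i_1}\cdots g_{i_s}$. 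Again $m$ is unary, so the graph has size $O(mk)$ plus label lengths, polynomial in the input.

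I do not expect a serious obstacle here; the statement is essentially a bookkeeping exercise and is the easy direction referenced in Figure~\ref{fi:reductions}. The only points requiring a little care are: (i) using word-labels and then invoking the size-distortion remark so that subdividing into single-letter labels keeps the instance size within a constant factor; (ii) ensuring acyclicity, which is automatic since every gadget only moves ``forward'' through a linearly ordered vertex set; and (iii) in the $\BKP$ and $\BSMP$ cases, explicitly noting that the unary encoding of $m$ is what makes the construction polynomial-time. These are all routine, so the proof will be short.
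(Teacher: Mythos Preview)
Your proposal is correct and essentially matches the paper's proof: the $\SSP$ and $\BSMP$ gadgets you describe are exactly the graphs in the paper's Figures~\ref{fi:SSP} and~\ref{fi:BSMP}, and like the paper you argue the group case via appending $g^{-1}$ (the paper notes the monoid case needs only obvious adjustments). The one cosmetic difference is that for $\BKP$ the paper simply invokes the known $\P$-time reduction $\BKP(G)\to\SSP(G)$ from~\cite{Miasnikov-Nikolaev-Ushakov:2014a} rather than building a separate gadget, but your direct construction is equally valid.
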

\begin{proof} The proof below is for the case of a group $G$. The case of a monoid $G$ is treated in the same way with obvious adjustments.

Let $w_1,w_2,\ldots, w_k, w$ be an input of $\SSP(G)$. Consider the graph $\Gamma=\Gamma(w_1,\ldots,w_k,w)$ shown in the Figure~\ref{fi:SSP}. We see immediately that $(w_1,\ldots,w_k,w)$ is a positive instance of $\SSP(G)$ if and only if $\Gamma$ is a positive instance of $\AGP(G)$.
\begin{figure}[h]
 \centering
 \includegraphics[width=4.5in]{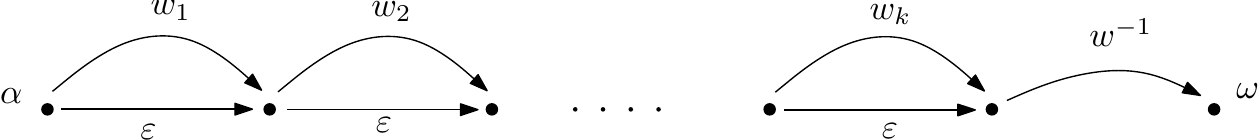}
 \caption{Graph $\Gamma(w_1,w_2,\ldots,w_k,w)$, Proposition~\ref{pr:reduction_to_agp}.}\label{fi:SSP}
\end{figure}
Since $\BKP(G)$ $\P$-time reduces to $\SSP(G)$ (see~\cite{Miasnikov-Nikolaev-Ushakov:2014a}), it is only left to prove that $\BSMP(G)$ reduces to $\AGP(G)$. Indeed, let $(w_1,w_2,\ldots,w_k,w,1^n)$ be an input of $\BSMP(G)$. Consider the graph $\Delta=\Delta(w_1,w_2,\ldots,w_k,w,1^n)$ shown in Figure~\ref{fi:BSMP}.
\begin{figure}[h]
 \centering
 \includegraphics[width=4.5in]{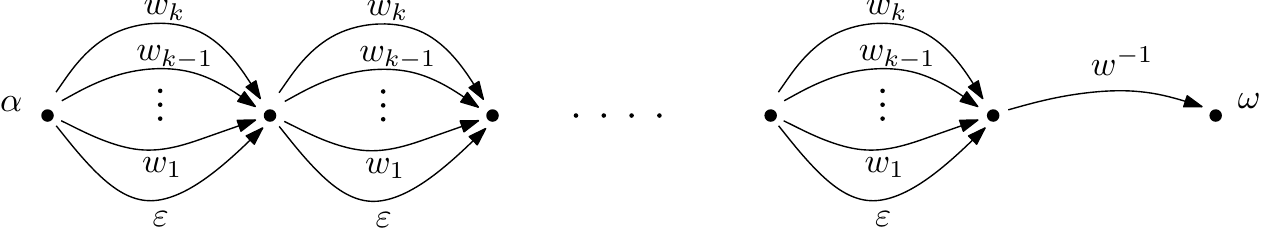}
 \caption{Graph $\Delta(w_1,w_2,\ldots,w_k,w,1^n)$, Proposition~\ref{pr:reduction_to_agp}. There are $n+2$ vertices in the graph.}\label{fi:BSMP}
\end{figure}
It is easy to see that $(w_1,w_2,\ldots,w_k,w,1^n)$ is a positive instance of $\BSMP(G)$ if and only if $\Delta$ is a positive instance of $\AGP$.
\end{proof}

Given Proposition~\ref{pr:reduction_to_agp} and results of~\cite{Miasnikov-Nikolaev-Ushakov:2014a}, we immediately see that $\AGP$ is $\NP$-complete in the following cases;
\begin{itemize}
\item[--] certain metabelian groups (finitely generated free metabelian groups, wreath products of two infinite abelian groups, Baumslag's group $B=\langle a,s,t\mid [a,a^t]=1, [s,t]=1, a^s=aa^t\rangle$, Baumslag--Solitar groups $BS(m,n)$ with $|m|\neq |n|$, $m,n\neq 0$),
\item[--] Thompson's group $F$,
\item[--] $F_2\times F_2$,
\item[--] linear groups $GL(n, \mathbb Z)$ with $n\ge 4$,
\item[--] braid groups $B_n$ with $n\ge 5$ (by~\cite{Makanina}),
\item[--] graph groups whose graph contains an induced square $C_4$.
\end{itemize}

While it still remains to be seen whether $\AGP$ reduces to either of the problems in Proposition~\ref{pr:reduction_to_agp}, we make note of the following two observations. First, in every case when it is known that those problems are $\P$-time, so is $\AGP$, as shown in Propositions~\ref{pr:agp_nilp} and~\ref{pr:agp_hyp}.
The second observation is that for a given group $G$, $\AGP(G)$ $\P$-time reduces to either of the problems $\SSP(G\ast F_2)$ or $\SSP(G\times F_2)$.

\begin{proposition}\label{pr:agp_to_ssp_star}\label{pr:agp_to_ssp_cross}
Let $G$ be a finitely generated monoid.
\begin{enumerate}
\item  $\AGP(G)$ is $\P$-time reducible to $\SSP(G\ast F_2)$.
\item  $\AGP(G)$ is $\P$-time reducible to $\SSP(G\times F_2)$.
\end{enumerate}
\end{proposition}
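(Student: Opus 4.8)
\emph{Proof idea.} The plan is to encode the combinatorics of the acyclic graph into a copy of $F_2=\langle a,b\rangle$ that is attached to $G$ --- freely in item (1), directly in item (2) --- so that ``solutions'' of the resulting $\SSP$ instance are forced to be paths in the graph. Let $(\Gamma,\alpha,\omega)$ with $\Gamma=(V,E)$ be an instance of $\AGP(G)$. Since $\Gamma$ is acyclic, fix a topological order $v_1,\dots,v_N$ of $V$ and enumerate $E=\{e_1,\dots,e_k\}$ in non-decreasing order of the index of the source vertex. Associate to $v_i$ the element $t_i:=a^{i}ba^{-i}\in F_2$; note $i\mapsto t_i$ is injective, and for $i\neq j$ the word $t_i^{-1}t_j=a^{i}b^{-1}a^{\,j-i}b\,a^{-j}$ is freely reduced with exactly two occurrences of $b^{\pm1}$. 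For an edge $e=(v_i\to v_j)$ with label $\ell_e\in X\cup X^{-1}\cup\{1\}$ put $g_e:=t_i^{-1}\ell_e t_j\in G\ast F_2$ (item (1)), respectively $g_e:=(\ell_e,\,t_i^{-1}t_j)\in G\times F_2$ (item (2)), and let the target be $g:=t_\alpha^{-1}t_\omega$, respectively $g:=(1_G,\,t_\alpha^{-1}t_\omega)$. The reduction outputs the $\SSP$ instance $(g_{e_1},\dots,g_{e_k};g)$; it is polynomial-time computable since $|t_i|\le 2N+1$. (For a monoid $G$ the same construction is used with the obvious adjustments, as in Proposition~\ref{pr:reduction_to_agp}.)

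The first half of correctness is routine. If $e_{p_1},\dots,e_{p_r}$ are the edges of an $\alpha$--$\omega$ path whose label word is trivial in $G$, then by the choice of the enumeration they appear among $g_{e_1},\dots,g_{e_k}$ in path order, so selecting exactly them (each with exponent $1$; no edge repeats since $\Gamma$ is acyclic) and multiplying gives a telescoping product: consecutive factors satisfy $t_{j_m}t_{i_{m+1}}^{-1}=1$ because $\mathrm{target}(e_{p_m})=\mathrm{source}(e_{p_{m+1}})$, and the $G$-labels multiply to $1_G$; so the product is $t_\alpha^{-1}t_\omega$ (resp. $(1_G,t_\alpha^{-1}t_\omega)$). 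Hence a positive $\AGP$ instance maps to a positive $\SSP$ instance.

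The substance is the converse: any subset of $\{g_{e_1},\dots,g_{e_k}\}$ that multiplies, in the order imposed by $\SSP$, to $g$ must come from an $\alpha$--$\omega$ path with trivial label. Write the selected edges as $e_{p_1},\dots,e_{p_r}$ with $e_{p_m}=(v_{i_m}\to v_{j_m})$; by construction $i_1\le\cdots\le i_r$ and $i_m<j_m$ for all $m$. In the unreduced product the occurrences of $b^{\pm1}$ (in the $F_2$-syllables in item (1); in the $F_2$-coordinate in item (2)) form the fixed alternating pattern $b^{-1},b,b^{-1},b,\dots$ of length $2r$. The key point I would establish is a bookkeeping lemma: the ``internal'' pair of each edge (the $b^{-1}$ from $t_{i_m}^{-1}$ and the $b$ from $t_{j_m}$) is always separated by a nonzero power of $a$ or by a nontrivial element of $G$ --- since $i_m\neq j_m$ --- and this persists under all cancellations, whereas a ``junction'' pair (the $b$ from $t_{j_m}$ and the $b^{-1}$ from $t_{i_{m+1}}^{-1}$) is separated exactly by $a^{\,i_{m+1}-j_m}$, hence cancels if and only if $j_m=i_{m+1}$, and such a cancellation creates no new cancellable pair. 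Consequently the number of $b^{\pm1}$-letters in the normal form of the product is $2r-2c$ with $c=|\{m:j_m=i_{m+1}\}|$. Since $g$ has exactly two such letters when $\alpha\neq\omega$ (and none when $\alpha=\omega$, which forces $r=0$), we get $c=r-1$, i.e. $j_m=i_{m+1}$ for all $m$: the selected edges form a walk, which in a DAG is a simple path. The product then telescopes to $t_{i_1}^{-1}(\ell_{p_1}\cdots\ell_{p_r})t_{j_r}$ (resp. $(\ell_{p_1}\cdots\ell_{p_r},\,t_{i_1}^{-1}t_{j_r})$), and comparing with $g$, using injectivity of $i\mapsto t_i$, forces $i_1=\alpha$, $j_r=\omega$ and $\ell_{p_1}\cdots\ell_{p_r}=1_G$, as required.

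I expect the bookkeeping lemma to be the main obstacle, and it is the only place where the two items genuinely differ. In item (2) the $F_2$-coordinate is literally the free-group product $\prod_m t_{i_m}^{-1}t_{j_m}$, so the claim is a transparent statement about cancellation in freely reduced words of $F_2$. In item (1) one must argue inside $G\ast F_2$, where cancellations among the $G$-labels $\ell_{p_m}$ can chain several labels together and merge the intervening $F_2$-syllables; the point to check is that even then no two ``internal'' $b$'s are brought into contact, which holds because the $a$-exponent separating the $b^{-1}$ of some $t_i^{-1}$ from the $b$ of some $t_j$ after such a merge has the form $j-i$ with $i<j$ (strict monotonicity of vertex indices along any chain of used edges), hence is nonzero. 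Everything else is a routine normal-form computation.
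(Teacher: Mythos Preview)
Your proposal is correct and follows essentially the same construction as the paper: encode vertices by freely independent elements of $F_2$, attach them to edge labels so that a product telescopes precisely along a path, and argue the converse by a cancellation analysis. One worthwhile simplification the paper exploits and you do not: for item~(1) the paper applies the retraction $G\ast F_2\twoheadrightarrow F_2$ (sending $G$ to $1$) before analysing cancellations, so that the ``$F_2$-component'' equality $\prod_m t_{i_m}^{-1}t_{j_m}=t_\alpha^{-1}t_\omega$ holds verbatim in $F_2$ in both items, and your entire bookkeeping about $G$-labels merging in $G\ast F_2$ becomes unnecessary.
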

\begin{proof}
Let $\Gamma$ be a given directed acyclic graph on $n$ vertices with edges labeled by group words in a generating set $X$ of the group $G$. We start by organizing a topological sorting on $\Gamma$, that is enumerating vertices of $\Gamma$ by symbols $V_1$ through $V_n$ so that if there is a path in $\Gamma$ from $V_i$ to $V_j$ then $i\le j$. This can be done in a time linear in $\size(\Gamma)$ by~\cite{Kahn}. We assume $\alpha=V_1$ and $\omega=V_n$, otherwise discarding unnecessary vertices. We perform a similar ordering of edges, i.e., we enumerate them by symbols $E_1,\ldots, E_m$ so that if there is a path in $\Gamma$ whose first edge is $E_i$ and the last edge $E_j$, then $i\le j$ (considering the derivative graph of $\Gamma$ we see that this can be done in time quadratic in $\size(\Gamma)$). For each edge $E_i$, $1\le i\le m$, denote its label by $u_i$, its origin by $V_{o(i)}$, and its terminus by $V_{t(i)}$. We similarly assume that $o(1)=1$ and $t(m)=n$.

Next, we produce in polynomial time $n$ freely independent elements $v_1,\ldots, v_n$ of the free group $F_2= \langle x, y \rangle$, of which we think as labels of the corresponding vertices $V_1,\ldots, V_n$. For example, $v_j=x^jyx^j$, $j=1,\ldots,n$, suffice.
We claim that
$$g_1 = v_{o(1)}u_1v_{t(1)}^{-1},\ g_2=v_{o(2)}u_2v_{t(2)}^{-1},\ \ldots,\ g_m=v_{o(m)}u_mv_{t(m)}^{-1};\ g=v_{1}v_{n}^{-1}
$$
is a positive instance of $\SSP(G\ast F_2)$ (or $\SSP(G\times F_2)$) if and only if $\Gamma$ is positive instance of $\AGP(G)$. (See Figure~\ref{fi:agp_to_ssp} for an example.)
\begin{figure}[h]
 \centering
 \includegraphics[width=4.5in]{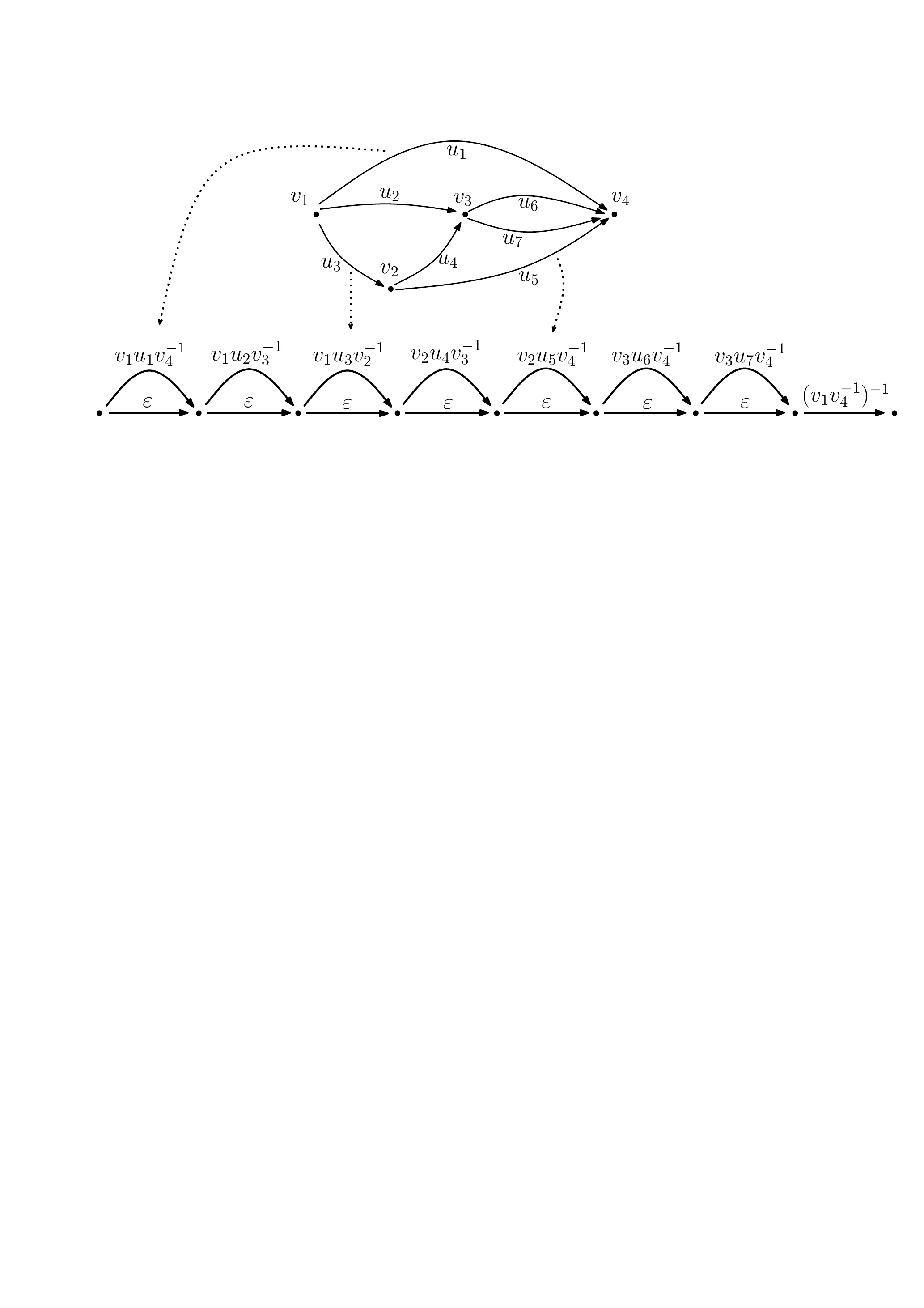}
 \caption{Reduction of $\AGP(G)$ to $\SSP(G\ast F_2)$ and $\SSP(G\times F_2)$. Dotted arrows illustrate the correspondence between edges labeled by $u_i$ and elements $g_i=v_{o(i)}u_iv_{t(i)}^{-1}$.}\label{fi:agp_to_ssp}
\end{figure}
Indeed, suppose there is an edge path $E_{i_1}, \ldots, E_{i_k}$ from $\alpha=V_1$ to $\omega=V_n$ in $\Gamma$ with $u_{i_1}\cdots u_{i_k}=1$ in $G$. Note that since the above sequence of edges is a path, for each $1\le \mu\le k-1$, we have $V_{t(\mu)}=V_{o(\mu+1)}$, so $v_{t(\mu)}=v_{o(\mu+1)}$. Then the same choice of elements $g_{i_1},\ldots, g_{i_k}$ gives
\begin{eqnarray*}
g_{i_1}\cdots g_{i_k}&=&(v_{o(i_1)}u_{i_1}v_{t(i_1)}^{-1})
(v_{o(i_2)}u_{i_2}v_{t(i_2)}^{-1})\cdots (v_{o(i_k)}u_{i_k}v_{t(i_k)}^{-1})\\
&=&v_{o(i_1)}u_{i_1}(v_{t(i_1)}^{-1}
v_{o(i_2)})u_{i_2}(v_{t(i_2)}^{-1}v_{o(i_3)})\cdots v_{o(i_k)}u_{i_k}v_{t(i_k)}^{-1}\\
&=&v_{o(i_1)}u_{i_1}u_{i_2}\cdots u_{i_m}v_{t(i_k)}^{-1}\\
&=&v_{o(i_1)}v_{t(i_k)}^{-1}=v_{1}v_{n}^{-1}=g.
\end{eqnarray*}
In the opposite direction, suppose in $G\ast F_2$ (or $G\times F_2$) the equality
\begin{equation}\label{eq:ssp}
g_{i_1}\cdots g_{i_k}=g,\quad i_1<\ldots<i_k,
\end{equation}
takes place. Consider the $F_2$-component of this equality:
$$
v_{o(i_1)}v_{t(i_1)}^{-1}\cdot v_{o(i_2)}v_{t(i_2)}^{-1}\cdots v_{o(i_k)}v_{t(i_k)}^{-1}=v_1v_n^{-1}.
$$
Since $v_1,\ldots, v_n$ are freely independent, it is easy to see by induction on $k$ that the latter equality only possible if
$$
v_{o(i_1)}=v_1,\ v_{t(i_1)}=v_{o(i_2)},\ v_{t(i_2)}=v_{o(i_3)},\ \ldots,\ v_{t(i_{k-1})}=v_{o(i_k)},\ v_{t(i_k)}=v_n,
$$
i.e. edges $E_{i_1}, E_{i_2},\ldots, E_{i_k}$ form a path from $V_1$ to $V_n$ in $\Gamma$. Further, inspecting the $G$-component of the equality~\eqref{eq:ssp}, we get that $u_{i_1}u_{i_2}\cdots u_{i_k}=1$, as required in $\AGP(G)$.
\end{proof}


\section{$\AGP$ and $\SSP$ in direct products.}\label{sec:direct_prod}
We show in this section that direct product of groups may change the complexity of the subset sum problem ($\SSP$) and the acyclic graph word problem ($\AGP$) dramatically, in contrast with results of Section~\ref{sec:free_prod}, where we show that free products preserve the complexity of $\AGP$.
\begin{proposition}\label{pr:agp_cross}
There exist groups $G,H$ such that $\AGP(G),\AGP(H)\in\P$, but $\AGP(G\times H)$ is $\NP$-complete.
\end{proposition}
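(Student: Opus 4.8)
The plan is to realize a known $\NP$-complete problem inside $\AGP(G \times H)$ while keeping both factors $\AGP$-easy. The natural candidate for $H$ is a free group, say $H = F_2$, for which $\AGP(F_2) \in \P$ by Proposition~\ref{pr:agp_hyp} (free groups are hyperbolic). For $G$ I would take a finitely generated abelian group, e.g. $G = \mathbb Z^d$ or even $G = \mathbb Z$, for which $\AGP(G) \in \P$ by Proposition~\ref{pr:agp_nilp}. The point is that $\AGP$ in a direct product lets one impose two independent constraints along a single chosen path: the $F_2$-coordinate can be used (exactly as in the proof of Proposition~\ref{pr:agp_to_ssp_cross}) to force the path to be a genuine combinatorial path of a prescribed shape, while the abelian coordinate simultaneously enforces an additive equation over $\mathbb Z$ on the edge labels that are traversed. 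That is precisely the structure of an $\NP$-complete additive problem such as \textsc{Subset Sum} over $\mathbb Z$.

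Concretely, I would start from an instance $a_1, \dots, a_k; s$ of the (genuinely $\NP$-complete, binary-encoded) subset sum problem over $\mathbb Z$. Build a graph $\Gamma$ with vertices $V_0, V_1, \dots, V_k$ and, between consecutive vertices $V_{j-1}$ and $V_j$, two parallel edges: one labeled by the element $(a_j, t_j) \in \mathbb Z \times F_2$ (``include $a_j$'') and one labeled by $(0, t_j)$ (``skip $a_j$''), where $t_1, \dots, t_k$ are chosen so that the $F_2$-components of any two distinct full paths differ — for instance $t_j = x^j y x^j$ as in Proposition~\ref{pr:agp_to_ssp_cross}, so that the $F_2$-coordinate of a complete path from $V_0$ to $V_k$ is always the fixed reduced word $t_1 t_2 \cdots t_k$ regardless of which parallel edge is chosen at each step. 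Take $\alpha = V_0$, $\omega = V_k$, and ask whether there is a path labeled by the element $(s, t_1 \cdots t_k) \in \mathbb Z \times F_2$. A path from $\alpha$ to $\omega$ picks, for each $j$, one of the two parallel edges; its $F_2$-coordinate is automatically $t_1 \cdots t_k$, and its $\mathbb Z$-coordinate is $\sum_{j \in S} a_j$ where $S$ is the set of indices where the ``include'' edge was chosen. Hence a suitable path exists iff some subset $S$ sums to $s$. Size of $\Gamma$ is polynomial in the input (the $a_j$ in binary fit in the edge labels, the $t_j$ add only polynomially many letters), so this is a polynomial-time reduction and $\AGP(G \times H)$ is $\NP$-hard. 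Membership in $\NP$: $G \times H$ is finitely generated linear (even virtually free abelian times free), so it has polynomial-time word problem and polynomially-bounded normal forms; a path through $\Gamma$ is a polynomial-size witness and the equality $w = 1$ in $G \times H$ can be checked in polynomial time. Therefore $\AGP(G \times H)$ is $\NP$-complete.

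The step I expect to require the most care is the $F_2$-coordinate bookkeeping: one must verify that with the chosen $t_j$ the $F_2$-component of \emph{every} complete $\alpha$-to-$\omega$ path equals the same fixed reduced word (so the $F_2$-constraint in the target element is automatically satisfied and does not accidentally rule out paths), and, conversely, that no path other than a complete $V_0 \to V_k$ path can contribute — this is immediate here since $\Gamma$ is a simple ``ladder'' with no shortcuts, but it is the place where the argument of Proposition~\ref{pr:agp_to_ssp_cross} is genuinely invoked. A secondary point worth stating carefully is the $\NP$-membership claim for the product group, i.e. that we have a polynomial bound on the length of normal forms of the elements that can be read along short paths; this follows from the linearity (or explicit normal form) of $\mathbb Z^d \times F_2$ and the fact that acyclicity of $\Gamma$ bounds path length by the number of edges. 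Everything else is a routine repackaging of the classical hardness of subset sum over $\mathbb Z$.
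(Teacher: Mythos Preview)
Your approach has a genuine gap: the reduction you describe is not polynomial-time, because you conflate the binary encoding of the integers $a_j$ with the encoding used in $\AGP$. By definition, an instance of $\AGP(G,X)$ is a graph whose edges are labeled by \emph{letters} in $X\cup X^{-1}\cup\{\varepsilon\}$ (or, after subdivision, by words over that alphabet, with the size counting total word length). To place the element $(a_j,t_j)\in\mathbb Z\times F_2$ on an edge you must spell out $a_j$ as a word in the generator of $\mathbb Z$, i.e.\ in unary; its contribution to $\size(\Gamma)$ is $|a_j|$, not $\log|a_j|$. Your sentence ``the $a_j$ in binary fit in the edge labels'' is exactly the step that fails. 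With unary encoding, classical subset sum is in~$\P$, so nothing $\NP$-hard is being reduced.

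In fact the paper already rules out your choice of groups: Theorem~\ref{th:kill_z} and Corollary~\ref{co:F2xZ} show that $\AGP(F_2\times N)\in\P$ for every finitely generated virtually nilpotent $N$, so $\AGP(\mathbb Z\times F_2)$ and $\AGP(\mathbb Z^d\times F_2)$ are in~$\P$ and cannot be $\NP$-complete unless $\P=\NP$. The paper instead takes $G=H=F_2$: by \cite[Theorem~7.4]{Miasnikov-Nikolaev-Ushakov:2014a} the problem $\BSMP(F_2\times F_2)$ is $\NP$-complete, and Proposition~\ref{pr:reduction_to_agp} then gives $\NP$-hardness of $\AGP(F_2\times F_2)$, while $\AGP(F_2)\in\P$ by Proposition~\ref{pr:agp_hyp}. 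The nonabelian factor on \emph{both} sides is essential; an abelian (or virtually nilpotent) factor can always be absorbed in polynomial time by the Cayley-graph product construction of Theorem~\ref{th:kill_z}.
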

\begin{proof}
It was shown in~\cite[Theorem 7.4]{Miasnikov-Nikolaev-Ushakov:2014a} that $\BSMP(F_2\times F_2)$ is $\NP$-complete. By Proposition~\ref{pr:reduction_to_agp} it follows that $\AGP(F_2\times F_2)$ is $\NP$-complete, while by Proposition~\ref{pr:agp_hyp} $\AGP(F_2)\in\P$.
\end{proof}

A similar statement can be made about $\SSP$ with the help of Proposition~\ref{pr:agp_to_ssp_cross}(2). 
However, in the next proposition we organize reduction of $\BSMP(G)$ to $\SSP(G\times \mathbb Z)$, thus simplifying the ``augmenting'' group, which allows to make a slightly stronger statement about complexity of $\SSP$ in direct products. We remind that the definition of $\P$-time Cook reduction used in the statement below can be found in Section~\ref{sec:prelim}. 

\begin{proposition}\label{pr:bsmp_to_ssp}
Let $G$ be a finitely generated monoid. Then $\BSMP(G)$ $\P$-time Cook reduces to $\SSP(G\times \mathbb Z)$.
\end{proposition}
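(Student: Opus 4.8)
The plan is to realize an instance of $\BSMP(G)$ as a short family of instances of $\SSP(G\times\mathbb Z)$, using the extra $\mathbb Z$-coordinate as a counter that pins down how many factors are used and using several parallel copies of the generator list to accommodate the fact that in $\BSMP$ the chosen factors may repeat and may occur in any order, whereas in $\SSP$ each listed element is used at most once and the product is read off in the fixed order of the input tuple. Concretely, starting from an instance $(w_1,\dots,w_k,w,1^n)$ of $\BSMP(G)$ --- which asks whether $w=w_{i_1}\cdots w_{i_s}$ in $G$ for some $s$ with $0\le s\le n$ and some $w_{i_1},\dots,w_{i_s}\in\{w_1,\dots,w_k\}$ --- I would first fix a convenient generating set of $G\times\mathbb Z$, taking a generating set $X$ of $G$ together with a generator $t$ of the $\mathbb Z$ direct factor, so that a pair $(u,m)$ with $m\ge0$ is written as the word $ut^m$ and $t$ is central in $G\times\mathbb Z$; this is harmless since the complexity of $\SSP$ does not depend on the chosen finite generating set.

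Next, for each $s\in\{0,1,\dots,n\}$ I would build an instance $\mathcal I_s$ of $\SSP(G\times\mathbb Z)$ whose tuple consists of $nk$ elements arranged in $n$ consecutive blocks, where block $\mu$ (for $1\le\mu\le n$) lists $w_1t,\,w_2t,\,\dots,\,w_kt$, and whose target element is $wt^{s}$. The reduction then makes the $n+1$ oracle calls $\mathcal I_0,\dots,\mathcal I_n$ and answers ``yes'' exactly when at least one of them does; since $n$ is given in unary this is a polynomial number of queries to instances of polynomial size, plus polynomial-time bookkeeping, so it is a $\P$-time Cook reduction once we verify that some $\mathcal I_s$ is positive iff the original $\BSMP$ instance is. For that verification I would argue as follows: each tuple element of $\mathcal I_s$ has $\mathbb Z$-coordinate $1$, so any selected subset $S$ multiplies, in the order the tuple prescribes, to an element of $\mathbb Z$-coordinate $|S|$; hence a witness for $\mathcal I_s$ must pick exactly $s$ elements, and by centrality of $t$ their product equals $w_{j_1}\cdots w_{j_s}t^{s}$, where $w_{j_1},\dots,w_{j_s}$ are the $G$-parts of the picked elements in tuple order. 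Thus $\mathcal I_s$ is positive iff $w=w_{j_1}\cdots w_{j_s}$ in $G$ for some such sequence. In the forward direction, from $w=w_{i_1}\cdots w_{i_s}$ with $s\le n$ I would select $w_{i_\mu}t$ from block $\mu$ for $\mu=1,\dots,s$ (and nothing elsewhere): since the blocks are consecutive, the tuple order of this selection is $1,2,\dots,s$, so its product is $w_{i_1}\cdots w_{i_s}t^{s}=wt^{s}$. Conversely, any witness for any $\mathcal I_s$ exhibits, as computed above, a product of exactly $s\le n$ of the $w_i$ equal to $w$. The degenerate call $\mathcal I_0$ has target $w$ and only the empty selection is admissible, so it just tests $w=1_G$, which matches the empty product; and the whole argument goes through verbatim when $G$ is a monoid rather than a group.

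The only step I expect to require real care --- and the conceptual heart of the reduction --- is reconciling the rigid left-to-right multiplication order imposed by $\SSP$ with the free ordering and repetition permitted in $\BSMP$. The device of $n$ parallel blocks, putting the $\mu$-th intended factor into block $\mu$, is precisely what lets the fixed tuple order reproduce an arbitrary intended order, and one must confirm that $n$ blocks always suffice to realize every product of length at most $n$ in its prescribed order; the complementary role of the $\mathbb Z$-coordinate is to prevent $\SSP$ from exploiting the $nk$ available elements to build a product longer than $n$. Everything else --- the trivial union over $s$, and the routine size bounds on the instances $\mathcal I_s$ --- is bookkeeping.
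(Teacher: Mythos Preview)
Your argument is correct and is essentially the paper's own proof. The paper builds, for each $m=1,\dots,n$, an $\SSP(G\times\mathbb Z)$ instance (presented as a chain-with-bypass graph $\Gamma_m$) consisting of $n$ consecutive blocks of $(w_1,1),\dots,(w_k,1)$ with target $(w,m)$, and accepts iff some $\Gamma_m$ is positive; your tuple $\mathcal I_s$ is exactly this instance, with the additional harmless call $\mathcal I_0$ to cover the empty product.
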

\begin{proof} The proof below is for the case of a group $G$. The case of a monoid $G$ is treated in the same way with obvious adjustments.

Let $w_1,w_2,\ldots, w_k, w, 1^n$ be the input of $\BSMP(G)$. We construct graphs $\Gamma_m$, $m=1,\ldots, n$, with edges labeled by elements of $G\times \mathbb Z$ as shown in the Figure~\ref{fi:bsmp_to_ssp}. Note that a path from $\alpha$ to $\omega$ is labeled by a word trivial in $G\times\mathbb Z$ if and only if it passes through exactly $m$ edges labeled by $(w_{i_1},1),\ldots,(w_{i_m},1)$ and $w_{i_1}\cdots w_{i_m}=w$ in $G$. Therefore, the tuple $w_1,\ldots,w_k,1^n$ is a positive instance of $\BSMP(G)$ if and only if at least one of graphs $\Gamma_1,\ldots, \Gamma_n$ is a positive instance of $\SSP(G\times\mathbb Z)$.
\begin{figure}[h]
 \centering
 \includegraphics[width=6in]{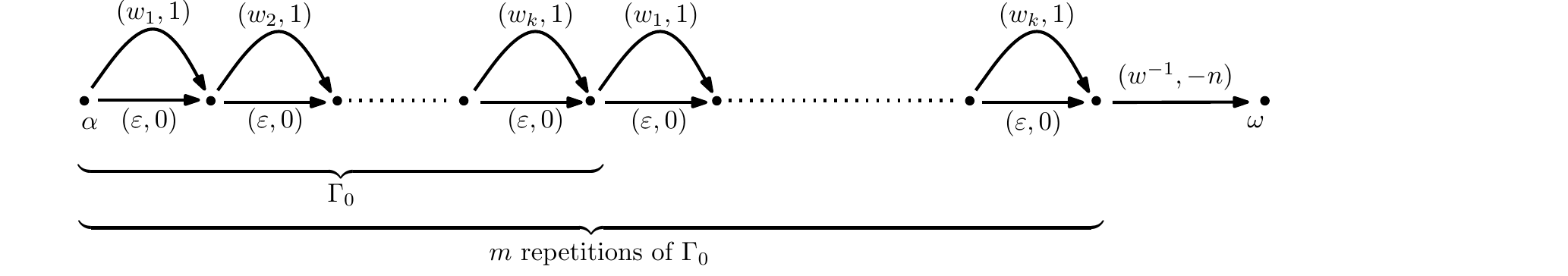}
 \caption{Reduction of $\BSMP(G)$ to $\SSP(G\times \mathbb Z)$. Graph $\Gamma_m$.} \label{fi:bsmp_to_ssp}
\end{figure}
\end{proof}

We put $G=F_2\times F_2$ in the above Proposition~\ref{pr:bsmp_to_ssp} to obtain the following result.
\begin{proposition}\label{pr:ssp_cross}
$\SSP(F_2\times F_2\times \mathbb Z)$ is $\NP$-complete.
\end{proposition}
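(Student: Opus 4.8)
The plan is to combine the preceding reductions into a short chain. First I would recall that membership in $\NP$ is immediate: $\SSP(F_2\times F_2\times\mathbb Z)$ is a special case of $\AGP$ of this group, and by Corollary~\ref{co:F2xZ} (or rather, by the fact that $F_2\times F_2\times\mathbb Z$ has polynomial-time, indeed real-time, word problem) a nondeterministically guessed choice of exponents $\varepsilon_1,\dots,\varepsilon_k\in\{0,1\}$ can be verified in polynomial time; so $\SSP(F_2\times F_2\times\mathbb Z)\in\NP$. The real content is $\NP$-hardness.

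For $\NP$-hardness I would invoke Proposition~\ref{pr:bsmp_to_ssp} with $G=F_2\times F_2$: it gives a $\P$-time Cook reduction of $\BSMP(F_2\times F_2)$ to $\SSP((F_2\times F_2)\times\mathbb Z)=\SSP(F_2\times F_2\times\mathbb Z)$. By~\cite[Theorem 7.4]{Miasnikov-Nikolaev-Ushakov:2014a}, $\BSMP(F_2\times F_2)$ is $\NP$-complete, in particular $\NP$-hard. Since $\NP$-hardness is preserved under $\P$-time Cook reductions (a Cook reduction from an $\NP$-hard problem to a problem in $\NP$ places the target problem outside $\P$ unless $\P=\NP$, and more to the point makes it $\NP$-hard under Cook reducibility), it follows that $\SSP(F_2\times F_2\times\mathbb Z)$ is $\NP$-hard. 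Together with membership in $\NP$, this yields $\NP$-completeness. I should note explicitly that here $\NP$-completeness is understood with respect to $\P$-time Cook (Turing) reductions, as flagged in Section~\ref{sec:prelim} where the authors say they make no effort to distinguish the two reduction types.

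The write-up is therefore essentially a one-line deduction: state $\SSP(F_2\times F_2\times\mathbb Z)\in\NP$, then cite Proposition~\ref{pr:bsmp_to_ssp} and~\cite[Theorem 7.4]{Miasnikov-Nikolaev-Ushakov:2014a}. There is no genuine obstacle; the only point requiring a word of care is the bookkeeping that a Cook reduction suffices for the intended notion of $\NP$-completeness, and (if one wanted a Karp reduction instead) one could alternatively route through Proposition~\ref{pr:agp_to_ssp_cross}(2) applied to $G=F_2\times F_2$ together with the $\NP$-completeness of $\AGP(F_2\times F_2)$ from Proposition~\ref{pr:agp_cross}, which gives a genuine many-to-one reduction of an $\NP$-complete problem to $\SSP(F_2\times F_2\times F_2)$ — but that augments by $F_2$ rather than $\mathbb Z$, so for the sharper statement with $\mathbb Z$ one must use the Cook reduction of Proposition~\ref{pr:bsmp_to_ssp}. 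I would present the proof via Proposition~\ref{pr:bsmp_to_ssp}, remarking on the reduction type.

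\begin{proof}
Since $F_2\times F_2\times\mathbb Z$ is finitely generated with polynomial time word problem, guessing a tuple $(\varepsilon_1,\dots,\varepsilon_k)\in\{0,1\}^k$ and checking $g=g_1^{\varepsilon_1}\cdots g_k^{\varepsilon_k}$ can be done in polynomial time; hence $\SSP(F_2\times F_2\times\mathbb Z)\in\NP$. For hardness, apply Proposition~\ref{pr:bsmp_to_ssp} with $G=F_2\times F_2$: it yields a $\P$-time Cook reduction of $\BSMP(F_2\times F_2)$ to $\SSP(F_2\times F_2\times\mathbb Z)$. By~\cite[Theorem 7.4]{Miasnikov-Nikolaev-Ushakov:2014a}, $\BSMP(F_2\times F_2)$ is $\NP$-complete, so $\SSP(F_2\times F_2\times\mathbb Z)$ is $\NP$-hard (under Cook reductions, cf. Section~\ref{sec:prelim}). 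Combined with membership in $\NP$, this gives $\NP$-completeness.
\end{proof}
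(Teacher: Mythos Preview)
Your proof is correct and follows essentially the same route as the paper: apply Proposition~\ref{pr:bsmp_to_ssp} with $G=F_2\times F_2$ and use the $\NP$-completeness of $\BSMP(F_2\times F_2)$ from~\cite[Theorem~7.4]{Miasnikov-Nikolaev-Ushakov:2014a}. If anything, your write-up is slightly more careful than the paper's, since you explicitly verify membership in $\NP$ and flag the Cook-versus-Karp distinction, whereas the paper's proof phrases the hardness step via $\AGP(F_2\times F_2)$ even though Proposition~\ref{pr:bsmp_to_ssp} is stated for $\BSMP$.
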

\begin{proof}
As we mentioned in the proof of Proposition~\ref{pr:agp_cross}, $\AGP(F_2\times F_2)$ is $\NP$-complete. By Proposition~\ref{pr:bsmp_to_ssp}, the latter $\P$-time Cook reduces to $\SSP((F_2\times F_2)\times\mathbb Z)$). Therefore, the $\SSP(F_2\times F_2\times \mathbb Z)$ is $\NP$-complete.
\end{proof}

The latter proposition answers the question whether direct product preserves polynomial time $\SSP$.

\begin{corollary}\label{cor:NP_complete_ssp_cross}
There exist finitely generated groups $G,H$ such that $\SSP(G)\in\P$, $\SSP(H)\in\P$ but $\SSP(G\times H)$ is $\NP$-complete.
\end{corollary}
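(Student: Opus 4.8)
The plan is to obtain Corollary~\ref{cor:NP_complete_ssp_cross} as an immediate consequence of the groups exhibited in the preceding propositions, so the "proof" is really a matter of bookkeeping about which factors to choose and why they have the required properties. First I would set $H=\mathbb Z$ and $G=F_2\times F_2$. The claim $\SSP(H)=\SSP(\mathbb Z)\in\P$ is classical: the one-dimensional subset sum problem over $\mathbb Z$ is solvable in polynomial time (indeed it is the base case for all the knapsack-type problems), and in any case $\mathbb Z$ is virtually nilpotent, so even $\AGP(\mathbb Z)\in\P$ by Proposition~\ref{pr:agp_nilp}, and $\SSP$ reduces to $\AGP$ by Proposition~\ref{pr:reduction_to_agp}.

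The more substantive input is $\SSP(G)=\SSP(F_2\times F_2)\in\P$. Here I would invoke the fact, established in~\cite{Miasnikov-Nikolaev-Ushakov:2014a}, that $\SSP$ is polynomial time in $F_2\times F_2$ (this is exactly the contrast the section is built around: $\SSP(F_2\times F_2)\in\P$ while $\BSMP(F_2\times F_2)$ and hence $\AGP(F_2\times F_2)$ are $\NP$-complete). Since $G\times H = F_2\times F_2\times\mathbb Z$, Proposition~\ref{pr:ssp_cross} gives that $\SSP(G\times H)$ is $\NP$-complete, which is precisely the assertion of the corollary. So the chain is: $\SSP(\mathbb Z)\in\P$ and $\SSP(F_2\times F_2)\in\P$, but $\SSP(F_2\times F_2\times\mathbb Z)$ is $\NP$-complete by Proposition~\ref{pr:ssp_cross}.

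The only point requiring a moment's care — and the closest thing to an obstacle — is making sure the hypothesis $\SSP(F_2\times F_2)\in\P$ is correctly attributed and that we are not implicitly using the stronger, false statement $\AGP(F_2\times F_2)\in\P$. The whole force of Proposition~\ref{pr:bsmp_to_ssp} and Proposition~\ref{pr:ssp_cross} is that one can push the complexity jump of $\BSMP$ into $\SSP$ at the cost of one extra $\mathbb Z$ factor; without that trick one would only get the weaker Proposition~\ref{pr:agp_cross}-style statement about $\AGP$. I would therefore phrase the proof so that it explicitly cites~\cite{Miasnikov-Nikolaev-Ushakov:2014a} for $\SSP(F_2\times F_2)\in\P$, notes $\SSP(\mathbb Z)\in\P$ (e.g. via virtual nilpotency and Propositions~\ref{pr:agp_nilp} and~\ref{pr:reduction_to_agp}, or directly), and then simply quotes Proposition~\ref{pr:ssp_cross} for the conclusion. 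A one-line proof of the following form should suffice:

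\begin{proof}
Take $G=F_2\times F_2$ and $H=\mathbb Z$. Then $\SSP(H)\in\P$ since $\mathbb Z$ is virtually nilpotent (apply Propositions~\ref{pr:agp_nilp} and~\ref{pr:reduction_to_agp}), and $\SSP(G)=\SSP(F_2\times F_2)\in\P$ by~\cite{Miasnikov-Nikolaev-Ushakov:2014a}. On the other hand, $G\times H=F_2\times F_2\times\mathbb Z$, so $\SSP(G\times H)$ is $\NP$-complete by Proposition~\ref{pr:ssp_cross}.
\end{proof}
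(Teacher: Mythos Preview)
Your proposal has a genuine gap: the claim that $\SSP(F_2\times F_2)\in\P$ is \emph{not} established in~\cite{Miasnikov-Nikolaev-Ushakov:2014a}, nor anywhere else. In fact the paper explicitly lists this as an open question (OQ2 in Section~\ref{sub:results}, and the closing sentence of Section~\ref{sec:direct_prod}: ``It remains to be seen whether $\SSP(F_2\times F_2)$ is $\NP$-complete.''). What~\cite{Miasnikov-Nikolaev-Ushakov:2014a} shows is that $\BSMP(F_2\times F_2)$ is $\NP$-complete; this says nothing about $\SSP(F_2\times F_2)$ being in $\P$. So your choice $G=F_2\times F_2$, $H=\mathbb Z$ does not work, because the hypothesis $\SSP(G)\in\P$ is unverified.

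The paper sidesteps exactly this difficulty by splitting the three factors differently: it takes $G=F_2$ and $H=F_2\times\mathbb Z$. Then $\SSP(F_2)\in\P$ since $F_2$ is hyperbolic (Proposition~\ref{pr:agp_hyp} plus Proposition~\ref{pr:reduction_to_agp}), and $\SSP(F_2\times\mathbb Z)\in\P$ by Corollary~\ref{co:F2xZ}, which in turn relies on Theorem~\ref{th:kill_z} (adding a virtually nilpotent direct factor does not change the complexity of $\AGP$). The product $G\times H$ is still $F_2\times F_2\times\mathbb Z$, so Proposition~\ref{pr:ssp_cross} applies. The point is that $\AGP(F_2\times\mathbb Z)\in\P$ is known, whereas $\SSP(F_2\times F_2)\in\P$ is not; the extra $\mathbb Z$ must be grouped with one of the $F_2$'s, not left on its own.
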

\begin{proof}
By Corollary~\ref{co:F2xZ}, $\AGP(F_2\times\mathbb Z)$ is in $\P$. Therefore, $\SSP(F_2)$ and $\SSP(F_2\times\mathbb Z)$ are in $\P$, while
$\SSP(F_2\times (F_2\times \mathbb Z))$ is $\NP$-complete by the above result.
\end{proof}

\begin{corollary}
$\SSP$ is $\NP$-complete in braid groups $B_n$, $n\ge 7$, special linear groups $SL(n,\mathbb Z)$, $n\ge 5$, graph groups whose graph contains the square pyramid~$\boxtimes$ (also called the wheel graph~$W_5$) as an induced subgraph.
\end{corollary}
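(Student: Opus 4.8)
The plan is to reduce each of these cases to the previously established fact that $\SSP(F_2\times F_2\times\mathbb Z)$ is $\NP$-complete (Proposition~\ref{pr:ssp_cross}), together with Proposition~\ref{pr:subgroup}, which propagates $\NP$-hardness to overgroups via $\AGP$ reductions, and with the standard fact that $\SSP$ in these groups lies in $\NP$ (each of these groups has polynomial-time word problem, so a nondeterministically guessed subset of exponents can be checked in polynomial time). The core task is thus to exhibit, in each family, a subgroup isomorphic to $F_2\times F_2\times\mathbb Z$.

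First I would observe that, since $F_2\times F_2\times\mathbb Z$ is a subgroup of $F_2\times F_2\times F_2$, and $F_k$ contains $F_2$ for all $k\ge 2$, it suffices to find a copy of $F_2\times F_2\times F_2$ (or even $F_2\times F_2\times\mathbb Z$ directly) in each of the listed groups. For graph groups (right-angled Artin groups) this is transparent: if the defining graph contains the wheel $W_5=\boxtimes$ as an induced subgraph, then by the standard description of subgroups of RAAGs induced by full subgraphs, the corresponding special subgroup is the RAAG on $W_5$; and $W_5$ contains three pairwise-nonadjacent vertices together with the structure needed so that the RAAG on $W_5$ surjects onto, and in fact contains, $F_2\times F_2\times\mathbb Z$ — more simply, one picks an induced subgraph of $W_5$ that is the join of two non-edges and a single vertex, giving exactly $F_2\times F_2\times\mathbb Z$ as a full (hence retract) subgroup, and applies Proposition~\ref{pr:subgroup}. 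For $SL(n,\mathbb Z)$ with $n\ge5$: $SL(n,\mathbb Z)$ contains $SL(2,\mathbb Z)\times SL(2,\mathbb Z)\times SL(1',\cdot)$-type block-diagonal subgroups; concretely, block-diagonal matrices with a $2\times2$ block, another $2\times2$ block, and (for $n=5$) one remaining diagonal entry forced to $1$ — but $\mathbb Z$ can instead be obtained from a unipotent $\begin{pmatrix}1&1\\0&1\end{pmatrix}$ sitting in one of the blocks or, more cleanly, by using $n\ge 5$ so that two $2\times2$ blocks leave room, and noting $SL(2,\mathbb Z)\supseteq F_2$ while an extra unipotent direction gives $\mathbb Z$; one should double check that a genuine direct factor $\mathbb Z$ commuting with two independent $F_2$'s can be placed, which for $n=5$ may require using $GL$-type tricks or instead citing $n\ge 5$ to fit $F_2\times F_2$ in $SL(4,\mathbb Z)$ and $\mathbb Z$ in the remaining block — I would verify the exact minimal $n$ here. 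For braid groups $B_n$ with $n\ge 7$: it is classical (e.g. via the standard generators $\sigma_i$ supported on disjoint strand-intervals) that $B_n$ contains $B_{n_1}\times\cdots\times B_{n_r}$ whenever $n_1+\cdots+n_r+(r-1)\le n$ (one strand of separation between blocks), and $B_3$ surjects onto (and contains) $F_2$ while $B_2\cong\mathbb Z$; taking $B_3\times B_3\times B_2$ needs $3+3+2+2=10$ strands in the naive count, so I would instead use $B_4\supseteq F_2\times F_2$ (this is known — $B_4$ contains $F_2\times F_2$) together with a disjoint $B_2\cong\mathbb Z$ block, needing $4+2+1=7$ strands, which matches the claimed bound $n\ge7$; citing the known embedding $F_2\times F_2\hookrightarrow B_4$ is the clean route.

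The main obstacle I anticipate is pinning down the exact threshold values ($n\ge 7$ for braids, $n\ge 5$ for $SL(n,\mathbb Z)$, $W_5$ for graph groups) rather than merely proving the qualitative statement, since off-by-one errors in "strand separation" counts for braid groups and in block-decomposition dimension counts for $SL(n,\mathbb Z)$ are easy to make; the graph-group case is the most robust since it reduces to identifying $F_2\times F_2\times\mathbb Z$ as the RAAG on a specific five-vertex induced subgraph of $W_5$ (two independent pairs forming a "join with a pendant-free apex" pattern) and invoking the retraction property of full subgraph subgroups. Once the embedding $F_2\times F_2\times\mathbb Z\le K$ is established in each family, Proposition~\ref{pr:subgroup}(1) (applied through the $\SSP\to\AGP$ correspondence, or directly the trivial reduction $\SSP(H)\to\SSP(G)$ for $H\le G$ which is even simpler than the $\AGP$ version) gives $\NP$-hardness of $\SSP(K)$, and membership in $\NP$ gives $\NP$-completeness.
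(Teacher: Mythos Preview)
Your overall strategy is exactly the paper's: exhibit an embedding $F_2\times F_2\times\mathbb Z\hookrightarrow K$ for each listed $K$, invoke Proposition~\ref{pr:ssp_cross}, and use the (trivial) monotonicity of $\SSP$ under passing to overgroups. Your treatment of the graph-group case is correct and is in fact sharper than the paper's one-line assertion: the RAAG on the square pyramid $W_5$ literally \emph{equals} $F_2\times F_2\times\mathbb Z$, since $W_5$ is the join of the $4$-cycle $C_4$ (giving $F_2\times F_2$) with a single vertex (giving the central~$\mathbb Z$), and full subgraphs of the defining graph yield retract subgroups.

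The genuine gap is in your braid-group bookkeeping. Two errors happen to cancel. First, the embedding $B_{n_1}\times\cdots\times B_{n_r}\hookrightarrow B_n$ requires only $n_1+\cdots+n_r\le n$, not $n_1+\cdots+n_r+(r-1)\le n$: the subgroups $\langle\sigma_1,\ldots,\sigma_{k-1}\rangle\cong B_k$ and $\langle\sigma_{k+1},\ldots,\sigma_{n-1}\rangle\cong B_{n-k}$ already commute, with no separator strand. Second, the assertion $F_2\times F_2\hookrightarrow B_4$ is not the known result; what the paper uses (citing Makanina) is $F_2\times F_2\hookrightarrow B_n$ for $n\ge 5$. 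Combining the correct strand count with the correct Makanina bound gives $B_5\times B_2\hookrightarrow B_7$, hence $F_2\times F_2\times\mathbb Z\hookrightarrow B_7$, which is how the threshold $n\ge 7$ actually arises. Your alternative route via $B_3\times B_3\times B_2$ would only reach $B_8$ once the strand count is corrected, so it does not recover the stated bound. For $SL(n,\mathbb Z)$ you correctly flag the need to check the minimal $n$; the paper likewise simply asserts containment of $F_2\times F_2\times\mathbb Z$ without spelling out the block decomposition.
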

\begin{proof}
Note that $F_2\times F_2$ embeds in a braid group $B_n$ with $n\ge 5$ by a result of Makanina~\cite{Makanina}. The statement now follows from Propositions~\ref{pr:bsmp_to_ssp} and~\ref{pr:subgroup} since all of the listed groups contain $F_2\times F_2\times \mathbb Z$ as a subgroup.
\end{proof}

It remains to be seen whether $\SSP(F_2\times F_2)$ is $\NP$-complete.

\section{$\AGP$ and rational subset membership problem in free products with finite amalgamated subgroups}\label{sec:free_prod}
Our primary concern in this section is the complexity of acyclic graph word problem ($\AGP$) in free products of groups. However, our approach easily generalizes to free products with amalgamation over a finite subgroup, and to a wider class of problems. We remind that the definition of $\P$-time Cook reduction can be found in Section~\ref{sec:prelim}.
\begin{theorem}\label{th:agp_to_factors}
Let $G,H$ be finitely generated groups, and $C$ be a finite group that embeds in $G$, $H$. Then $\AGP(G*_CH)$ is $\P$-time Cook reducible to $\AGP(G),\AGP(H)$.
\end{theorem}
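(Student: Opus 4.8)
The plan is to reduce $\AGP(G*_C H)$ to $\AGP(G)$ and $\AGP(H)$ by exploiting the normal form theory for amalgamated free products: a word $w$ equals $1$ in $G*_C H$ if and only if it can be reduced to the empty word by a bounded-length sequence of "syllable collapses," where a maximal syllable lying in one factor gets rewritten using membership in $C$ and then amalgamated with its neighbors. Given an instance $(\Gamma,\alpha,\omega)$, I would first normalize $\Gamma$ so that every edge is labeled by a single generator of either $G$ or $H$ (or $\varepsilon$), at a polynomial cost as discussed after the definition of $\AGP$. The key structural observation is that if a path from $\alpha$ to $\omega$ spells a word trivial in $G*_C H$, then (fixing a transversal for $C$ in $G$ and in $H$) the path decomposes into consecutive arcs, each lying entirely within the $G$-labeled subgraph or entirely within the $H$-labeled subgraph, such that each arc spells an element of $C$, and the product of these $C$-elements, read in order, is trivial — this is just the normal form statement: a product of syllables is trivial iff every syllable already lies in $C$ and their product collapses.

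The main construction is then a product graph that tracks, at each vertex of $\Gamma$, which element of $C$ has been "accumulated so far" and which factor we are currently reading in. Concretely I would build $\Gamma^\ast$ on vertex set $V(\Gamma)\times C$ (and possibly $\times\{G,H\}$ to remember the active factor), where a $G$-edge $v\xrightarrow{g}v'$ of $\Gamma$ lifts to moves $(v,c)\to(v',c)$ that stay inside the current $G$-arc, and transitions between arcs are governed by calls asking whether a given $G$-labeled (resp. $H$-labeled) sub-path spells a prescribed element of $C$ — a question that is itself an instance of $\AGP(G)$ (resp. $\AGP(H)$), since "spells $c$" is the same as "spells $1$" after post-multiplying by $c^{-1}$. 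Because $C$ is finite, the blow-up factor $|C|$ (and the factor $2$ for the active-factor bit) is a constant, so $\Gamma^\ast$ has size polynomial in $\size(\Gamma)$, and the number of subroutine calls to $\AGP(G)$ and $\AGP(H)$ needed to populate its edges is polynomial. The final answer — whether there is an $\alpha$-to-$\omega$ path trivial in $G*_C H$ — becomes a reachability-plus-consistency check in $\Gamma^\ast$ from $(\alpha,1)$ to $(\omega,1)$, together with verifying that the accumulated product of the syllable-values in $C$ is trivial, which I can fold into the $C$-coordinate bookkeeping so that it reduces to pure reachability in $\Gamma^\ast$.

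There is a genuine subtlety I expect to be the main obstacle: in an amalgamated product a "syllable" need not correspond to a contiguous block of like-labeled edges in $\Gamma$ — an edge labeled by a generator of $G$ that happens to represent an element of $C$ can be absorbed into an adjacent $H$-syllable, and such absorptions can cascade, so the decomposition of a trivial path into $C$-valued arcs is not read off locally from edge labels. Handling this requires the automaton in $\Gamma^\ast$ to nondeterministically guess arc boundaries and to allow "$C$-transitions" that reinterpret a completed $G$-arc's value as an element of $C$ to be merged into the ongoing $H$-computation (and vice versa); proving that this guessing faithfully captures exactly the trivial words — i.e., soundness and completeness against the normal form theorem — is the technical heart of the argument. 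I would structure that verification as: (completeness) given a trivial path, apply the normal form reduction to its spelled word, read off arc boundaries and the corresponding $C$-values, and exhibit the matching path in $\Gamma^\ast$; (soundness) given an accepting path in $\Gamma^\ast$, reconstruct from its arc structure a syllable decomposition of the spelled word with all syllables in $C$ and trivial product, hence trivial in $G*_C H$. For the plain free product case $C=\{1\}$ this degenerates to: every maximal like-labeled arc must spell $1$ in its factor, checked by $\AGP$ calls, and the arcs must chain up $\alpha\to\omega$ — a clean special case that I would state first as a warm-up before adding the finite amalgam.
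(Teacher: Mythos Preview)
Your proposal has a genuine gap at the ``key structural observation.'' The claim that a path spelling $1$ in $G*_C H$ decomposes into consecutive like-labeled arcs each of which spells an element of $C$ is false, and so is the $C=\{1\}$ warm-up version (``every maximal like-labeled arc must spell $1$ in its factor''). Take $C=\{1\}$ and a path whose label is $g_1 h g_2$ with $h=1$ in $H$, $g_1,g_2\neq 1$ in $G$, and $g_1g_2=1$ in $G$; the word is trivial in $G*H$, yet neither the $G$-arc $g_1$ nor the $G$-arc $g_2$ spells $1$. The normal-form theorem only guarantees that \emph{some} syllable lies in $C$; after absorbing it into a neighbor the merged syllable may still lie outside $C$, and the reduction must be iterated. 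You correctly flag this cascading later, but the fix you sketch---nondeterministic arc boundaries plus ``$C$-transitions'' that feed a completed $G$-arc's $C$-value into the ongoing $H$-computation---does not break the circularity: deciding which $H$-subpaths spell elements of $C$ already requires knowing which $G$-subpaths do, and vice versa. Your product automaton on $V\times C$, with edges populated by $\AGP(G)$ and $\AGP(H)$ calls on the \emph{original} like-labeled subgraphs, rejects the example above, since from $(\alpha,1)$ there is no $G$-jump available at all.

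The paper resolves the circularity by saturation rather than a one-shot product. Starting from $\Gamma_0=\Gamma$ (with alphabet $X^{\pm1}\cup Y^{\pm1}\cup C$), one restricts to the current $G$-subgraph (edges in $X^{\pm1}\cup C$), uses $\AGP(G)$ to test for every ordered pair $v_1,v_2$ and every $c\in C$ whether some $v_1\to v_2$ path spells $c$, and if so adds the shortcut edge $v_1\overset{c}{\to}v_2$; then does the same on the $H$-side; then repeats. Each round either adds a new $C$-edge or the process stabilizes, and there are at most $|C|\cdot|V(\Gamma)|^2$ possible $C$-edges, so termination occurs after polynomially many rounds with polynomially many oracle calls per round; shortcuts respect the existing reachability order, so acyclicity is preserved and the oracle calls remain legitimate $\AGP$ instances. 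Correctness is then the induction you wanted: in the stable graph, any trivial label has a $C$-valued syllable that can be replaced by a single $C$-edge already present, strictly shortening the path, until an $\varepsilon$-edge $\alpha\to\omega$ appears. Your instinct to test ``spells $c$'' via $\AGP$ in the factors, and to exploit finiteness of $C$ for the polynomial bound, is exactly right; what is missing is that this test must be applied to subgraphs that \emph{already contain} the $C$-shortcuts discovered so far, which forces the iteration. The $C$-coordinate on vertices is then unnecessary: the $C$-information lives on the shortcut edges themselves.
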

\begin{proof}
Let $G$ be given by a generating set $X$, and $H$ by $Y$. Let $\Gamma=\Gamma_0$ be the given acyclic graph labeled by $\Sigma=X\cup X^{-1}\cup Y\cup Y^{-1}\cup C$ (we assume that alphabets $X^{\pm 1}$, $Y^{\pm 1}$ are disjoint from $C$). Given the graph $\Gamma_k$, $k\in\mathbb Z$, construct graph $\Gamma_{k+1}$ by adding edges to $\Gamma_k$ as follows.

Consider $\Gamma_{k}'$, the maximal subgraph of $\Gamma_k$ labeled by  $X\cup X^{-1}\cup C$ (i.e., the graph obtained by removing all edges labeled by $Y\cup Y^{-1}$).
For each $c\in C$ and each pair of vertices $v_1,v_2\in V(\Gamma_k)$ from the same connected component of $\Gamma_k$, decide whether a word equal to $c$ in $G$ is readable as a label of an oriented path in $\Gamma_k'$, using the solution to $\AGP(G)$.
For $c\in C$, let $E_c$ be the set of pairs $(v_1,v_2)\in V(\Gamma_k)\times V(\Gamma_k)$ such that the answer to the above question is positive, and there is no edge $v_1\overset{c}{\to}v_2$ in $\Gamma_k.$ Construct the graph $\overline{\Gamma}_k$ by adding edges  $v_1\overset{c}{\to}v_2$, $(v_1,v_2)\in E_c$ to the graph $\Gamma_k$, for all $c\in C$.
Now consider the maximal subgraph $\Gamma_k''$ of $\overline{\Gamma}_k$ labeled by $Y\cup Y^{-1}\cup C$ and perform similar operation using the solution to $\AGP(H)$, obtaining the graph $\overline{\overline{\Gamma}}_k=\Gamma_{k+1}$.
Since there are at most $2|C|\cdot |V(\Gamma)|^2$ possible $c$-edges to be drawn, it follows that $\size(\Gamma_{k+1})<2|C|\size(\Gamma)^2$, and that $\Gamma_k=\Gamma_{k+1}=\ldots$ for some $k=n$, where $n\le 2|C| \size(\Gamma)^2$.

We claim that a word $w$ equal to $1$ in $G*_CH$ is readable from $\alpha$ to $\omega$ in $\Gamma$ if and only if there is an edge $\alpha\overset{\varepsilon}{\to}\omega$ in the graph $\Gamma_{n}$.
Indeed, suppose there is a path in $\Gamma$ and, therefore, in $\Gamma_n$, from $\alpha$ to $\omega$ labeled by a word $w=w_1w_2\cdots w_m$, with $w_j\in \Sigma$ and at least one non-$C$ letter among $w_1,\ldots, w_m$, such that $w=1$ in $G*_CH$.
The normal form theorem for free products with amalgamated subgroup guarantees that $w$ has a subword $w'=w_iw_{i+1}\cdots w_j$ of letters in $X\cup X^{-1}\cup C$ or $Y\cup Y^{-1}\cup C$ with $w'=c\in C$ in $G$ or $H$, respectively, with at least one non-$C$ letter among $w_i,\ldots,w_j$.
Since $\Gamma_n=\Gamma_{n+1}$, the word $w_1\cdots w_{i-1}c w_{j+1}\cdots w_m$ is readable as a label of a path in $\Gamma_{n}$ from $\alpha$ to $\omega$. 
By induction, a word $c_1\cdots c_\ell$, $\ell\le m$, $c_1,\ldots,c_\ell\in C$, is readable as a label of an oriented path in $\Gamma_{n}$ from $\alpha$ to $\omega$.
By the construction, $\Gamma_{n+1}=\Gamma_{n}$ contains an edge $\alpha\overset{\varepsilon}{\to}\omega$.
The converse direction of the claim is evident.
\end{proof}

\begin{corollary}\label{cor:ptime_agp}
Let $G,H$ be finitely generated groups, and $C$ be a finite group that embeds in $G$, $H$. If $\AGP(G)$, $\AGP(H)\in\P$ then $\AGP(G*_CH)\in\P$.
\end{corollary}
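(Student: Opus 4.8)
The plan is to derive this immediately from Theorem~\ref{th:agp_to_factors} by observing that composing a polynomial-time Cook reduction with polynomial-time solutions of the target problems yields a polynomial-time algorithm. First I would recall that, by Theorem~\ref{th:agp_to_factors}, there is an algorithm $\mathcal{A}$ deciding $\AGP(G*_CH)$ that makes a polynomial number of calls to subroutines for $\AGP(G)$ and $\AGP(H)$ and performs polynomial-time work outside of those calls.

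Next I would verify that the inputs to those subroutine calls have size bounded by a polynomial in the size of the original instance. This is exactly what the proof of Theorem~\ref{th:agp_to_factors} already supplies: every auxiliary graph $\Gamma_k$ satisfies $\size(\Gamma_k)<2|C|\,\size(\Gamma)^2$, the maximal subgraphs $\Gamma_k'$ and $\Gamma_k''$ fed to the oracles are no larger, and the iterative construction stabilizes after $n\le 2|C|\,\size(\Gamma)^2$ stages, so only polynomially many queries are ever issued.

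Finally, under the hypothesis $\AGP(G),\AGP(H)\in\P$, each oracle call can be replaced by the corresponding polynomial-time decision procedure run on an input of polynomial size, hence executing in polynomial time. Since $\mathcal{A}$ makes at most polynomially many such calls and spends only polynomial time between them, the resulting algorithm for $\AGP(G*_CH)$ runs in polynomial time, i.e.\ $\AGP(G*_CH)\in\P$.

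The only point requiring care — and the closest thing to an obstacle here — is the bookkeeping on query sizes: a Cook reduction is polynomial only relative to the sizes of the inputs it passes to the oracle, so one must confirm (as the proof of Theorem~\ref{th:agp_to_factors} does) that these remain polynomially bounded throughout the construction and that the number of stages is itself polynomial. Everything else is a routine composition of polynomial bounds.
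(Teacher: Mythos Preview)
Your proposal is correct and matches the paper's approach: the paper states this corollary without proof, treating it as an immediate consequence of Theorem~\ref{th:agp_to_factors}, and you have simply spelled out the standard reasoning (including the bookkeeping on query sizes already supplied in that theorem's proof) for why a $\P$-time Cook reduction to problems in $\P$ yields a problem in $\P$.
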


\begin{corollary}\label{cor:ptme_ssp_free}
Subset sum ($\SSP$), bounded knapsack ($\BKP$), bounded submonoid membership ($\BSMP$), and acyclic graph word ($\AGP$) problems are polynomial time decidable in any finite free product with finite amalgamations of finitely generated virtually nilpotent and hyperbolic groups.
\end{corollary}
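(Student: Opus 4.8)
The plan is to reduce all four problems to the acyclic graph word problem and then establish that $\AGP$ is $\P$-time decidable for the class of groups in question by induction on the number of free factors. By Proposition~\ref{pr:reduction_to_agp}, $\SSP(G)$, $\BKP(G)$, and $\BSMP(G)$ are each $\P$-time reducible to $\AGP(G)$ for any finitely generated group $G$, so once the statement is proved for $\AGP$ it follows for all four problems at once.

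So it remains to show $\AGP(K)\in\P$ whenever $K$ is a finite free product with finite amalgamations of groups $G_1,\ldots,G_r$ ($r\ge 1$), each $G_i$ finitely generated virtually nilpotent or hyperbolic and all amalgamated subgroups finite. Removing the outermost amalgamation (equivalently, any one edge of the underlying tree of groups) exhibits such a $K$, when $r>1$, as $K=K_1\ast_C K_2$ with $C$ finite and $K_1,K_2$ again finite free products with finite amalgamations of proper subcollections of the $G_i$; the case $C=\{1\}$ covers ordinary free products, and $r=1$ means $K$ is a single $G_i$. I would argue by induction on $r$. In the base case $r=1$, $\AGP(G_i)\in\P$ by Proposition~\ref{pr:agp_nilp} if $G_i$ is virtually nilpotent and by Proposition~\ref{pr:agp_hyp} if $G_i$ is hyperbolic. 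For $r>1$, the inductive hypothesis gives $\AGP(K_1),\AGP(K_2)\in\P$, and since $C$ is finite and embeds in $K_1$ and $K_2$, Corollary~\ref{cor:ptime_agp} yields $\AGP(K)\in\P$. This completes the induction, so $\AGP\in\P$ throughout the class, and by Proposition~\ref{pr:reduction_to_agp} the same holds for $\SSP$, $\BKP$, and $\BSMP$.

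This is essentially a bookkeeping assembly of Propositions~\ref{pr:agp_nilp},~\ref{pr:agp_hyp},~\ref{pr:reduction_to_agp} and Corollary~\ref{cor:ptime_agp}, so I do not expect a genuine obstacle. The only point needing a little care is the inductive decomposition $K=K_1\ast_C K_2$: one should be explicit that a finite free product with finite amalgamations of $r$ groups splits across some edge of its defining tree of groups into two such products of fewer factors with finite edge group, so that the hypotheses of Corollary~\ref{cor:ptime_agp} (finiteness of $C$, $\AGP$ of both factors in $\P$) are met at every stage of the recursion.
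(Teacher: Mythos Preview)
Your proposal is correct and is precisely the intended argument: the paper leaves this corollary without an explicit proof because it follows immediately by combining Propositions~\ref{pr:agp_nilp}, \ref{pr:agp_hyp}, \ref{pr:reduction_to_agp} with an iterated application of Corollary~\ref{cor:ptime_agp}, exactly as you have written out. Your added remark about the inductive splitting along an edge of the underlying tree of groups is the only detail one might spell out, and you have handled it correctly.
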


Theorem~\ref{th:agp_to_factors} can be generalized to apply in the following setting. We say that a family $\mathcal F$ of finite directed graphs is {\em progressive} if it is closed under the following operations:
\begin{enumerate}
\item taking subgraphs,
\item adding shortcuts (i.e., drawing an edge from the origin of an oriented path to its terminus), and 
\item appending a hanging oriented path by its origin. 
\end{enumerate}
Two proper examples of such families are acyclic graphs, and graphs stratifiable in a sense that vertices can be split into subsets $V_1,\ldots, V_n$ so that edges only lead from $V_k$ to $V_{\ge k}$. We say that a subset of a group $G$ is $\mathcal F$-rational if it can be given by a finite (non-deterministic) automaton in $\mathcal F$. Finally, by {\em $\mathcal F$-uniform rational subset membership problem} for $G$ we mean the problem of establishing, given an $\mathcal F$-rational subset of a group $G$ and a word as an input, whether the subset contains an element equal to the given word in $G$. In this terminology, $\AGP(G)$ is precisely the $\mathcal F_{\mathrm{acyc}}$-uniform rational subset membership problem for $G$, where $\mathcal F_{\mathrm{acyc}}$ is the family of acyclic graphs. 

\begin{theorem}\label{th:generalized_agp_free}
Let $\mathcal F$ be a progressive family of directed graphs. Let $G,H$ be finitely generated groups, and $C$ be a finite group that embeds in $G$, $H$. Then $\mathcal F$-uniform rational subset membership problem for $G*_CH$ is $\P$-time Cook reducible to that for $G$ and $H$.
\end{theorem}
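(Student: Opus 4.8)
The plan is to rerun the saturation construction from the proof of Theorem~\ref{th:agp_to_factors} essentially verbatim, with the $\mathcal F$-uniform rational subset membership oracles for $G$ and $H$ in the role of the $\AGP(G)$, $\AGP(H)$ oracles, using the three closure properties of a progressive family to guarantee that every automaton built along the way still lies in $\mathcal F$, so that these oracle calls are legitimate. First I would put the input into a convenient shape. An instance is a finite automaton $\mathcal A\in\mathcal F$ with a start vertex $\alpha$ and a set of accept vertices, labeled by letters from $X\cup X^{-1}\cup Y\cup Y^{-1}\cup C$ (here $X$, $Y$ generate $G$, $H$, and, as in Theorem~\ref{th:agp_to_factors}, we permit labels to be elements of $C$), together with a word $w_0$. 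Since $\mathcal A$ has only polynomially many accept vertices, and which vertex is designated as accepting is not part of the assertion ``$\mathcal A\in\mathcal F$'', we may branch over the accept vertices and assume a single one, $\omega$. Appending to $\omega$ a hanging oriented path labeled by $w_0^{-1}$ and ending at a fresh vertex $\omega'$ --- permitted by clause~(3) of progressivity --- reduces the problem to: decide whether some oriented path from $\alpha$ to $\omega'$ carries a label equal to $1$ in $G*_CH$, an instance of exactly the form treated in Theorem~\ref{th:agp_to_factors}.

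Next I would run the saturation. Given $\Gamma_k$, form $\Gamma_k'$, the maximal subgraph labeled by $X\cup X^{-1}\cup C$, which lies in $\mathcal F$ by clause~(1). For each $c\in C$ and each ordered pair of vertices $(v_1,v_2)$, invoke the $\mathcal F$-uniform rational subset membership oracle for $G$ on the automaton $\Gamma_k'$ with start $v_1$, accept $v_2$, and input word a fixed word over $X\cup X^{-1}$ representing $c$ --- substituting such a word for a $C$-label does not change the underlying directed graph, so membership in $\mathcal F$ is unaffected --- to decide whether a word equal to $c$ in $G$ is readable from $v_1$ to $v_2$ in $\Gamma_k'$; whenever the answer is positive and no such edge is present, add the shortcut edge $v_1\overset{c}{\to}v_2$, which keeps the graph in $\mathcal F$ by clause~(2). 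Performing the symmetric step for $H$ gives $\Gamma_{k+1}$. As in Theorem~\ref{th:agp_to_factors}, every new edge joins two already-present vertices and carries a label from the fixed finite set $C$, so the sequence stabilizes after at most $2|C|\,\size(\Gamma)^2$ steps, each $\Gamma_k$ has size polynomial in $\size(\Gamma)$, and overall the procedure makes polynomially many oracle calls with polynomial work in between; hence it is a $\P$-time Cook reduction. Correctness --- that the final graph $\Gamma_n$ contains an edge $\alpha\overset{c}{\to}\omega'$ with $c=1$ in $C$ precisely when some word equal to $1$ in $G*_CH$ is readable from $\alpha$ to $\omega'$ --- is obtained verbatim as in Theorem~\ref{th:agp_to_factors}, via the normal form theorem for free products with amalgamation: a readable nontrivial word equal to $1$ contains a pinch, a maximal syllable lying in the alphabet of a single factor and equal there to some element of $C$, and stability of $\Gamma_n$ lets one splice in that single $C$-letter to obtain a strictly shorter readable word equal to $1$; iterating produces a readable word over $C$, which forces the desired edge.

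The combinatorial core --- the pinching argument and the polynomial stabilization bound --- is identical to that of Theorem~\ref{th:agp_to_factors}, so the genuinely new content is the verification that each graph manipulation used by the reduction stays inside $\mathcal F$. This is precisely what the three closure axioms are tailored to supply: clause~(1) for passing to the one-factor subgraphs $\Gamma_k'$, $\Gamma_k''$ on which the oracles are called; clause~(2) for the shortcut edges that constitute the saturation; and clause~(3) for absorbing the target word $w_0$, thereby casting the problem in the form treated in Theorem~\ref{th:agp_to_factors}. I expect the only real care needed to lie in this bookkeeping, together with the immediate observation that replacing a $C$-label by a fixed word over $X\cup X^{-1}$ (respectively $Y\cup Y^{-1}$) when querying the $G$-oracle (respectively the $H$-oracle) leaves the underlying directed graph, and hence membership in $\mathcal F$, untouched.
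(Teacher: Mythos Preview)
Your proposal is correct and follows essentially the same approach as the paper: append a hanging path labeled $w_0^{-1}$ at the accept state(s), then rerun the saturation from Theorem~\ref{th:agp_to_factors}. The paper's own proof is a two-sentence sketch (``append a hanging path\ldots then repeat the procedure with obvious minor adjustments''), whereas you have spelled out precisely which closure axiom of a progressive family licenses each step --- clause~(1) for the one-factor subgraphs fed to the oracles, clause~(2) for the shortcut edges, clause~(3) for absorbing $w_0$ --- which is exactly the bookkeeping the paper leaves implicit.
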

\begin{proof}
Given an automaton $\Gamma$ and a word $w$ as an input, we start by forming an automaton $\Gamma_w$ by appending a hanging path labeled by $w^{-1}$ at every accepting state of $\Gamma$. The procedure then repeats that in the proof of Theorem~\ref{th:agp_to_factors}, with obvious minor adjustments. 
\end{proof}
In particular, uniform rational subset membership problem for $G*_CH$, with $C$ finite, is $\P$-time Cook reducible to that in the factors $G$ and $H$, cf. decidability results in the case of graphs of groups with finite edge groups in~\cite{Kambites-Silva-Steinberg}.

Given that the uniform rational subset membership problem is $\P$-time decidable for free abelian groups~\cite{Lohrey-survey}, and that $\P$-time decidability of the same problem carries from a finite index subgroup~\cite{Grunschlag}, the above theorem gives the following corollary.

\begin{corollary} Let $G$ be a finite free product with finite amalgamations of finitely generated abelian groups. The rational subset membership problem in $G$ is decidable in polynomial time.
\end{corollary}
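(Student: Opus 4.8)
The plan is to assemble this corollary from Theorem~\ref{th:generalized_agp_free} together with the two cited black-box facts about the uniform rational subset membership problem (URSMP) for abelian and virtually abelian groups, proceeding by induction on the number of free-product factors in the given decomposition of $G$. First I would record the base case: a finitely generated abelian group $A$ is virtually $\mathbb{Z}^r$ for some $r$; by~\cite{Lohrey-survey} the URSMP is $\P$-time decidable for $\mathbb{Z}^r$, and by~\cite{Grunschlag} $\P$-time decidability of the URSMP passes from a finite-index subgroup to the overgroup, so the URSMP for $A$ is in $\P$. (In fact the same two facts show the URSMP is in $\P$ for every finitely generated virtually abelian group, which is the form in which I will feed factors into the induction.)

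Next, write $G = G_1 *_{C_1} G_2 *_{C_2} \cdots$ as the given finite iterated free product with finite amalgamations, where each $G_i$ is finitely generated abelian. Bracketing the expression so that the outermost operation is a single amalgamated free product $G = G' *_{C} G''$ with $C$ finite and $G'$, $G''$ each a finite iterated free product with finite amalgamations of abelian groups over fewer factors, Theorem~\ref{th:generalized_agp_free} (applied with $\mathcal{F}$ the family of all finite directed graphs, which is trivially progressive, or simply invoking the ``in particular'' sentence immediately following that theorem) gives a $\P$-time Cook reduction of the URSMP for $G$ to the URSMP for $G'$ and $G''$. By the induction hypothesis the latter two problems are in $\P$; since a $\P$-time Cook reduction to problems in $\P$ yields a problem in $\P$ (polynomially many calls to a polynomial-time subroutine, with polynomial-time overhead), we conclude that the URSMP for $G$ is in $\P$, and a fortiori the (non-uniform, single-element) rational subset membership problem is in $\P$.

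The only genuinely delicate point is bookkeeping rather than mathematics: one must make sure the finite amalgamating subgroups $C$ that appear when one re-brackets the iterated free product are still finite, and that each intermediate group $G'$, $G''$ is again a \emph{finite} iterated free product with \emph{finite} amalgamations of finitely generated abelian groups, so that the induction hypothesis genuinely applies. Since any sub-bracketing of the original expression has this form with the same (finite) edge groups, this is immediate, but it is worth stating explicitly so the induction is well-founded. No new combinatorial or group-theoretic obstacle arises beyond what is already handled inside the proof of Theorem~\ref{th:generalized_agp_free}.
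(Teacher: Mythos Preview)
Your proposal is correct and follows essentially the same route as the paper: establish the base case for finitely generated abelian groups via the cited results of~\cite{Lohrey-survey} and~\cite{Grunschlag}, then invoke Theorem~\ref{th:generalized_agp_free} (or its ``in particular'' consequence) to pass to the amalgamated free product. The paper's proof is simply a one-sentence version of what you wrote; your explicit induction and bookkeeping about re-bracketing are sound but not strictly needed beyond what the paper states.
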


\section{Knapsack problem in free products}\label{sec:knapsack}
As examples cited in Section~\ref{sec:prelim} show, the (unbounded) knapsack problem ($\KP$) in general does not reduce to its bounded version ($\BKP$), nor to acyclic graph word problem ($\AGP$). However, it was shown in~\cite{Miasnikov-Nikolaev-Ushakov:2014a} that in the case of hyperbolic groups there is indeed such reduction. In light of results of Section~\ref{sec:free_prod}, it is natural to ask whether a similar reduction takes place for free products of groups.

For an element $f$ of a free product of groups $G*H$, let $\|f\|$ denote the {\em syllable length} of $f$, i.e. its geodesic length in generators $G\cup H$ of $G*H$. We say that $f$ is {\em \dumb} if $f$ can be conjugated by an element of $G*H$ into $G\cup H$, i.e.
\begin{equation}\label{eq:simple}
f=u^{-1}f'u,
\end{equation}
where $u\in G*H, \|f'\|\le 1$. Otherwise, we say that $f$ is {\em \nondumb}.

\begin{lemma}\label{le:power}
Let $G,H$ be groups and let an element $f\in G\ast H$ be \nondumb.
Then there are $a_1,\ldots,a_r\in G\cup H$, $b_1,\ldots,b_s\in G\cup H$, $c_1,\ldots,c_t\in G\cup H$, $r+t\le 2\|f\|$, $s\le \|f\|$, such that for every integer $n\ge 3$ the normal form of $f^n$ is
$$
a_1\cdots a_r(b_1\cdots b_s)^{n-2}c_1\cdots c_t.
$$
\end{lemma}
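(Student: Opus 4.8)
The plan is to analyze the normal form of $f$ directly and track what happens when powers are taken, exploiting the fact that non-simple elements are cyclically reduced after conjugation.

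\medskip\noindent
\textbf{Setup via cyclic reduction.} First I would write $f$ in normal form and pass to a cyclically reduced conjugate. Concretely, write $f = u^{-1} g u$ where $g$ is \emph{cyclically reduced}, meaning that if $\|g\| = \ell$ and $g = g_1 g_2 \cdots g_\ell$ is its normal form (alternating syllables from $G$ and $H$), then $g_1$ and $g_\ell$ lie in different factors, so that $g_\ell g_1$ does not collapse. Since $f$ is non-simple we have $\ell \geq 2$, and moreover $\ell$ must be \emph{even}: if $\ell$ were odd then $g_1$ and $g_\ell$ would lie in the same factor, contradicting cyclic reduction (for $\ell \geq 3$), and $\ell=1$ is excluded by non-simplicity. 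Also $\|f'\| \le 1$ forces nothing here; rather the key point is $\|u\| + \ell/2 \le \|f\|$ type bounds coming from the fact that $f = u^{-1} g u$ is a normal form computation — I would record the precise inequality $\|g\| + 2\|u\| \le \text{(something like } 2\|f\|)$, being careful because $u$ may share syllables with $g$.

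\medskip\noindent
\textbf{Computing $f^n$.} For $g$ cyclically reduced of even syllable length $\ell$, the normal form of $g^n$ is simply the concatenation $g_1 \cdots g_\ell g_1 \cdots g_\ell \cdots$ ($n$ times), with no collapsing at the seams, so $g^n$ has normal form $(g_1 \cdots g_\ell)^n$. Then
$$
f^n = u^{-1} g^n u = u^{-1}(g_1 \cdots g_\ell)^n u.
$$
Now I would compute the normal form of this expression. Writing $u = u_1 \cdots u_p$ in normal form, the product $u^{-1} g^n u$ involves possible cancellation only at the two junctions $u^{-1} \mid g^n$ and $g^n \mid u$, and since $g^n$ is a "long" periodic word (length $n\ell \geq 3\ell \geq 6$), for $n \geq 3$ any cancellation is confined to the outermost copy of $g_1\cdots g_\ell$ on each side and cannot eat through more than one full period. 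Thus there is a fixed "prefix part" $a_1 \cdots a_r$ (coming from $u^{-1}$ amalgamated with part of one copy of $g_1\cdots g_\ell$), a fixed "periodic part" $b_1 \cdots b_s$ which is a cyclic rotation of $g_1 \cdots g_\ell$ (so $s = \ell$, but I should allow $s \le \ell$ in case of a partial merge adjusting the count), and a fixed "suffix part" $c_1 \cdots c_t$, such that for all $n \ge 3$,
$$
f^n = a_1 \cdots a_r (b_1 \cdots b_s)^{n-2} c_1 \cdots c_t
$$
in normal form — the exponent is $n-2$ because one period is absorbed into the prefix block and one into the suffix block.

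\medskip\noindent
\textbf{Length bounds.} The bounds $r + t \le 2\|f\|$ and $s \le \|f\|$ should fall out of the observation that $a_1 \cdots a_r$ is built from $u^{-1}$ (length $\le \|u\|$, up to syllable merging) plus at most one period of length $\ell$; similarly for $c_1 \cdots c_t$; and $s \le \ell$. Since $\ell + 2\|u\|$ is controlled by $\|f\|$ — I would nail down $\ell \le 2\|f\|$ and $\|u\| \le \|f\|$, or a combined estimate — arithmetic gives $s \le \ell \le \|f\|$ and $r + t \le 2\|u\| + \ell \le 2\|f\|$ (adjusting constants in the writeup as the precise normal-form bookkeeping dictates).

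\medskip\noindent
\textbf{Main obstacle.} The delicate point is the syllable bookkeeping at the two junctions: when the last syllable of $u^{-1}$ lies in the same factor as $g_1$ they merge into one syllable (possibly the trivial one, causing further collapse into $g_2$, etc.), and one must check this cascading cancellation terminates strictly inside the first period — which is exactly where $g$ being \emph{cyclically reduced} and non-simple (hence $\ell \geq 2$) is used, together with $n \geq 3$ guaranteeing at least three periods so that the middle period $b_1\cdots b_s$ survives untouched and the exponent $n-2$ is well-defined. Getting the constants in $r+t \le 2\|f\|$, $s \le \|f\|$ exactly right, rather than merely $O(\|f\|)$, is the part requiring the most care; everything else is a routine application of the normal form theorem for free products.
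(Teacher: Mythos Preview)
Your proposal is correct and matches the paper's approach, whose entire proof is the single line ``Follows from the normal form theorem for free products''; you have simply spelled out the standard cyclic-reduction argument that this sentence encodes. As for your flagged obstacle: if $u$ is chosen via the canonical cyclic reduction (so that $u^{-1}$ is a prefix of the normal form of $f$), one checks that there is \emph{no} cancellation at the junction $u^{-1}\mid g$ and at most a single syllable merge at $g\mid u$, so the cascading you worry about does not occur and the bounds $r+t\le 2\|f\|$, $s\le\|f\|$ fall out directly.
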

\begin{proof} Follows from the normal form theorem for free products.
\end{proof}

The statement below is, in some sense, a strengthened big-powers condition for a free product.
\begin{proposition}\label{pr:big_power}
Let $p(x)$ be the polynomial $p(x)=x^2+4x$. Let $G,H$ be groups. If for $f_1,f_2,\ldots, f_m, f\in G*H$ there exist non-negative integers $n_1,n_2,\ldots,n_m\in\mathbb Z$ such that
\begin{equation}\label{eq:big_powers}
f_1^{n_1}f_2^{n_2}\cdots f_m^{n_m}=f
\end{equation}
then there exist such integers $n_1,\ldots,n_m$ with
$$n_i\le p(\|f_1\|+\|f_2\|+\ldots+\|f_m\|+\|f\|)\mbox{ whenever } f_i\mbox{ is \nondumb}.
$$
\end{proposition}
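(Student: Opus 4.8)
The plan is to reduce the statement to a statement about a single word equation in the free product by substituting the normal forms of the powers $f_i^{n_i}$, and then to argue that the amount of cancellation that can occur is controlled linearly by $\|f_1\|+\ldots+\|f_m\|+\|f\|$, which forces all the non-simple exponents to be small. First I would dispose of the simple factors: if $f_i$ is simple, write $f_i=u_i^{-1}f_i'u_i$ with $\|f_i'\|\le1$, and note that the exponents of simple factors are allowed to be arbitrary, so we only need to bound the non-simple ones. Throwing away the simple factors is not literally possible (they interact with the rest of the product), but they can be absorbed: replace each block $f_i^{n_i}$ by $u_i^{-1}(f_i')^{n_i}u_i$, and observe that $(f_i')^{n_i}$ is a single syllable (in $G$ or $H$) regardless of $n_i$. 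So after this rewriting, the left-hand side of~\eqref{eq:big_powers} becomes a product of words whose total syllable length is bounded by a linear function of $\sum\|f_i\|$, except that each non-simple block $f_i^{n_i}$ still has length growing with $n_i$.

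The key step is then Lemma~\ref{le:power}: for a non-simple $f_i$ and $n_i\ge 3$, the normal form of $f_i^{n_i}$ is $a_1^{(i)}\cdots a_{r_i}^{(i)}(b_1^{(i)}\cdots b_{s_i}^{(i)})^{n_i-2}c_1^{(i)}\cdots c_{t_i}^{(i)}$ with $r_i+t_i\le 2\|f_i\|$, $s_i\le\|f_i\|$, and crucially the ``periodic core'' $(b_1^{(i)}\cdots b_{s_i}^{(i)})^{n_i-2}$ is cyclically reduced of syllable length $s_i(n_i-2)$ with no cancellation between consecutive periods. Substituting all these normal forms into~\eqref{eq:big_powers}, I would bound the total number of syllables that can be cancelled when the whole product is reduced to the normal form of $f$: in a free product, reducing a concatenation of already-reduced words, the number of syllables consumed by cancellation at each of the (at most $O(m)$, in fact bounded by $\sum\|f_i\|$) junctions is at most twice the smaller of the two adjacent pieces — but more to the point, after all cancellation the result has syllable length $\|f\|$, so the total number of syllables destroyed is (total input length) $-\|f\|$, and these cancellations propagate through the junctions from the outside in. Because each non-simple core is periodic and incompressible (cancellation from one side eats at most one full period before it must stop against the next), the cancellation reaching into block $f_i^{n_i}$ from either side removes at most $O(\|f_i\|)$ periods; any cancellation from an outer block has already been ``used up'' by the time it reaches the deep interior of block $i$. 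Hence if $n_i-2$ exceeds roughly $2(\sum_j\|f_j\|+\|f\|)$, the core of $f_i^{n_i}$ survives in the normal form of $f$ with multiplicity at least $n_i-2 - O(\sum\|f_j\|)$; but then $f$ itself contains an arbitrarily long $b^{(i)}$-periodic subword, and one can decrease $n_i$ by the length of one period $s_i\le\|f_i\|$ without changing the product $f$, contradicting minimality — or, phrased positively, this shows a solution with $n_i\le \|f\|+O(\sum\|f_j\|)\le p(\sum_j\|f_j\|+\|f\|)$ exists.

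To make the bound come out as exactly $p(x)=x^2+4x$ with $x=\sum\|f_j\|+\|f\|$, I would be generous rather than sharp: each of the at most $x$ blocks can, in the worst case, have cancellation depth proportional to $x$ at each end (since an outer block of length up to $x$ can cancel into an inner one), so the safe threshold is of order $x^2$, with the additive $4x$ absorbing the $r_i+t_i\le 2\|f_i\|$ prefix/suffix overhead and the ``$-2$'' shift in the exponent. The argument that one may decrease an oversized $n_i$ by $s_i$ is a short substitution argument using Lemma~\ref{le:power} again: if $n_i\ge 3$ and $n_i-1\ge 3$, then $f_i^{n_i}$ and $f_i^{n_i-1}\cdot f_i$ have normal forms differing by exactly one period $b_1^{(i)}\cdots b_{s_i}^{(i)}$ in the core, and if that period is not actually present at the corresponding spot of the reduced product — which is the case once $n_i$ is past threshold and the period provably survives — we get the same element $f$ with smaller $n_i$.

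The main obstacle, and the step I expect to require the most care, is the bookkeeping of cancellation across many junctions simultaneously: one must argue that cancellation originating at the boundary between blocks $j$ and $j+1$ cannot ``tunnel through'' the incompressible periodic core of an intermediate block to reach block $i$, so that the depth of cancellation into block $i$ is genuinely bounded by $O(\|f_{i-1}\|+\|f_{i+1}\|+\cdots)$ rather than by the whole sum in a way that compounds. This is exactly where the ``no cancellation between consecutive periods'' property of a cyclically reduced power in a free product does the work — it is the free-product analogue of the classical big-powers / small-cancellation phenomenon — but formalizing ``the normal form of the whole product literally displays the core of $f_i^{n_i}$ as a subword'' needs a clean induction on $m$, peeling one block at a time and tracking how far into the current partial product the next block can cancel.
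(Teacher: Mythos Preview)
Your approach has a genuine gap. The central claim --- that if $n_i$ is large then the periodic core $(b_1^{(i)}\cdots b_{s_i}^{(i)})^{n_i-2}$ of $f_i^{n_i}$ must survive as a subword of the normal form of $f$ --- is false. Cancellation \emph{can} consume the entire core of block $i$, and your intuition that ``cancellation from one side eats at most one full period before it must stop against the next'' is exactly where it breaks. Take $m=2$, $f_1$ non-simple, $f_2=f_1^{-1}$, $f=1$: then $f_1^{n}f_2^{n}=1$ for every $n$, the core of $f_1^{n}$ is completely cancelled by the core of $f_2^{n}$, and nothing survives. So the statement you propose to prove by induction on $m$ --- that the normal form of the product literally displays the core of $f_i^{n_i}$ --- is not true, and the ``decrease $n_i$ alone without changing $f$'' step makes no sense (changing only $n_i$ obviously changes the product).

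The paper's argument repairs precisely this. Instead of trying to show the core survives, it uses the tree structure of the Cayley graph of $G*H$ (with respect to the generating set $G\cup H$): since $w_1\cdots w_m f^{-1}=1$ traces a closed walk and each $w_j$ is a geodesic, the path of $w_i$ is covered by the paths of the other $m$ words, and in a tree any two geodesics overlap in a single contiguous segment, so $w_i$ splits into at most $m$ pieces. By pigeonhole one piece contains at least $(n_i-2-m)/m\ge N+1$ full periods of $b$, and that piece cancels against a single $w_j$. Since $\|b^{N+1}\|\ge 2N+2$ while any simple or low-power block has syllable length at most $2N$, this $f_j$ must itself be non-simple with $w_j=a'b'^{\,n_j-2}c'$, and the long overlap forces a period of $b$ to match a period of $b'^{-1}$ up to a cyclic shift. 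The key move is then to reduce $n_i$ and $n_j$ \emph{simultaneously} (by $\|b'\|$ and $\|b\|$ respectively), which preserves~\eqref{eq:big_powers}. Iterating yields the bound. The missing idea in your proposal is this pairing of blocks: when a core is eaten, it is eaten by another core, and one must lower both exponents together.
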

\begin{proof} For convenience, denote $\|f_1\|+\|f_2\|+\ldots+\|f_m\|+\|f\|=N$ and $f=f_0$, $-1=n_0$.
For each $f_i$, $i=0,\ldots,m$, we represent $f_i^{n_i}$ by its normal form $w_i$, using Lemma~\ref{le:power} whenever $f_i$ is {\nondumb} and $n_i\ge 3$. Suppose that for some {\nondumb} $f_i$, $n_i > N^2+4N$. We inspect the path traversed by the word $w_1w_2\ldots w_mw_0$  in the Cayley graph of $G*H$ with respect to generators $G\cup H$. Since this word corresponds to the trivial group element, the path must be a loop and thus the word $w_i$ splits in at most $m$ pieces, each piece mutually canceling with a subword of $w_j$, $j\neq i$. Let $w_i=ab^{n_i-2}c$ as in Lemma~\ref{le:power}. Then by pigeonhole principle at least one piece contains at least $(n_i-2-m)/m\ge N+1$ copies of the word $b$. Observe that $f_j$ is {\nondumb} and $n_j\ge 3$, otherwise $\|f_j^{n_j}\|\le 2N$, so it cannot mutually cancel with $b^{N+1}$ since $\|b^{N+1}\|\ge 2N+2$. Let $w_j=a'{b'}^{n_j-2}c'$. We note that $\|b\|\le \|f_i\|\le N$, $\|b'\|\le \|f_j\|\le N$, so $\|b'\|\le N$ copies of $b$ mutually cancel with $\|b'\|\cdot\|b\|/\|b'\|=\|b\|$ copies of $b'$ (up to a cyclic shift). Therefore replacing $n_i,n_j$ with $n_i-\|b'\|$ and $n_j-\|b\|$, respectively, preserves equality~\eqref{eq:big_powers}. Iterating this process, we may find numbers $n_1,\ldots, n_m$ that deliver equality~\eqref{eq:big_powers} such that $n_i\le p(N)=N^2+4N$ whenever $f_i$ is \nondumb.
\end{proof}
We observe that $p(N)=N^2+4N$ in the above proposition is monotone on $\mathbb{Z}_{\ge 0}$.

We say that $G$ is a {\em\pbk} group if there is a polynomial $q$ such that any instance $(g_1,\ldots,g_k,g)$ of $\KP(G)$ is positive if and only if the instance $(g_1,\ldots,g_k,g, q(N))$ of $\BKP(G)$ is positive, where $N$ is the total length of $g_1,\ldots, g_k,g$. It is easy to see that this notion is independent of a choice of a finite generating set for~$G$.

\begin{proposition}\label{pr:kp_to_bkp}
If $G,H$ are {\pbk} groups then $G*H$ is a {\pbk} group.
\end{proposition}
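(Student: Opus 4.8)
The plan is to handle the \nondumb{} factors with the strengthened big-powers condition (Proposition~\ref{pr:big_power}), and the \dumb{} factors by localizing their exponents into knapsack instances inside $G$ and $H$ and invoking the {\pbk} hypothesis on the factors. Fix finite generating sets for $G$ and $H$, and let $(f_1,\dots,f_k,f)$ be a positive instance of $\KP(G*H)$ of total word length $N$. Discard the factors equal to $1$, and for each \dumb{} $f_i$ write $f_i=u_i^{-1}a_iu_i$ with $a_i\in(G\cup H)\setminus\{1\}$ a single syllable and $u_i$ a word realizing the cyclic reduction of $f_i$, so $\|u_i\|\le\|f_i\|$ and $u_i,a_i$ have word length $\le N$ (these words are assembled from syllables of the normal form of $f_i$, each of which is a concatenation of subwords of the given word for $f_i$).

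First I would apply Proposition~\ref{pr:big_power} to fix a solution $(n_1,\dots,n_k)$ with $n_i\le p(N)$ for every \nondumb{} index (using that $p$ is monotone and that syllable length is at most word length), and set $n_i:=0$ whenever $f_i^{\,n_i}=1$, so that every surviving \dumb{} power $a_i^{\,n_i}$ is nontrivial. Substituting into $f_1^{\,n_1}\cdots f_k^{\,n_k}=f$ the normal forms of the \nondumb{} powers (of word length $\le p(N)\cdot N$ by Lemma~\ref{le:power}) and the factorizations $f_i^{\,n_i}=u_i^{-1}a_i^{\,n_i}u_i$ of the \dumb{} ones yields an identity
\[
v_0\,a_{j_1}^{\,n_{j_1}}\,v_1\,a_{j_2}^{\,n_{j_2}}\cdots a_{j_r}^{\,n_{j_r}}\,v_r=f \qquad\text{in }G*H,
\]
where $j_1<\dots<j_r$ enumerate the surviving \dumb{} indices and the $v_t$ are explicit words over the generators of total word length bounded by a polynomial $M=M(N)$.

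The core of the argument is to examine the reduction of the left-hand side to the fixed normal form $f=f^{(1)}\cdots f^{(q)}$, $q\le N$. Each power box $a_{j_t}^{\,n_{j_t}}$ is a single syllable, so it lies in a unique maximal same-factor run of the reduced word; grouping the boxes by the run they occupy partitions them into \emph{blocks}, each block consisting of boxes from one factor together with the fixed pieces of the $v_t$'s separating consecutive boxes of the block. A run either equals one syllable $f^{(\ell)}$ of $f$ or collapses to the identity of its factor; in particular a block of the other factor interleaved (in the word order) between boxes of a given block must be of the collapsing type. Hence, for a block $B$ lying in a $G$-run, the product $\Pi_B=a_{j}^{\,n_j}w_1a_{j'}^{\,n_{j'}}w_2\cdots$ of the block (with the $w_i$ the reduced fixed pieces, of word length $\le M$, obtained after substituting the identity for any nested collapsing blocks) lies in $G$, and the identity above forces $\Pi_B=p^{-1}f^{(\ell)}s^{-1}$ (or $p^{-1}s^{-1}$ if $B$ collapses), where $p,s$ are bounded fixed $G$-words from neighbouring $v_t$'s. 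Since $f^{(\ell)}$, being a syllable of the normal form of $f$, admits a $G$-word of length $\le N$, conjugating the $w_i$'s to the ends turns this into a \emph{positive} instance of $\KP(G)$ of total length $\le\operatorname{poly}(N)$, witnessed by the current block exponents; as $G$ is {\pbk}, it has a solution with all exponents $\le q_G(\operatorname{poly}(N))$, and symmetrically for $H$. Replacing, block by block, the exponents of the boxes of each block by such a bounded solution leaves every $\Pi_B$ (hence the value $1$ of every collapsing block) unchanged; since the portions of the word strictly between blocks are fixed, the modified word still equals $f$. We thus obtain a solution of $f_1^{\,n_1}\cdots f_k^{\,n_k}=f$ with all exponents bounded by a single polynomial in $N$ depending only on $p$, $q_G$, $q_H$ and the generating sets, which is precisely the assertion that $G*H$ is {\pbk} (the reverse implication is trivial).

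I expect the main obstacle to be the bookkeeping in the third paragraph: giving a clean, exponent-independent description of the blocks in terms of the free-product reduction, verifying by induction on nesting depth that the fixed pieces inside a $G$-block reduce to elements of $G$ once the nested (necessarily collapsing) blocks are contracted, and checking that the block-wise substitution of exponents does not disturb the rest of the reduction. None of this is deep, but it requires a careful case analysis; an alternative organization that may be cleaner is to argue by downward induction on the sum of the \dumb{} exponents among solutions whose \nondumb{} exponents are already $\le p(N)$, using the {\pbk} bounds on the blocks to decrease that sum whenever some \dumb{} exponent is too large.
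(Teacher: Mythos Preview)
Your sketch is correct and follows essentially the same approach as the paper: bound the \nondumb{} exponents via Proposition~\ref{pr:big_power}, then observe that in the free-product reduction each \dumb{} power $a_i^{n_i}$ lands in a unique vanishing (or target-syllable) sub-equation over $G$ or $H$, which after conjugation becomes a polynomial-size $\KP$ instance in the corresponding factor. The paper organizes the latter step as a one-syllable-at-a-time elimination sequence $g_i^{(j)},h_i^{(j)}$ rather than your block/nesting picture, but the content is the same; your remark that preserving each block value $\Pi_B$ keeps the whole reduction valid is exactly what makes the exponent replacements independent.
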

\begin{proof}
Let $f_1,f_2,\ldots,f_m,f\in G*H$ be an input of $\KP(G*H)$. For each $f_i$, normal form of $f_i^{n_i}$, $n_i\ge 3$, is given by Lemma~\ref{le:power} or by simplicity of $f_i$. Suppose some $n_1,n_2,\ldots,n_m$ provide a solution to $\KP$, i.e. $f_1^{n_1}\cdots f_m^{n_m}f^{-1}=1$ in $G*H$. Representing $f^{-1}$ and each $f_i^{n_i}$ by their normal forms and combining like terms we obtain (without loss of generality) a product
\begin{equation}\label{eq:norm_form}
g_1h_1g_2h_2\cdots g_\ell h_\ell=1,
\end{equation}
where each $g_i\in G$ and each $h_i\in H$, and we may assume $\ell\le \|f\|+m\cdot (\max\{\|f_i\|\})\cdot p(\max\{\|f_i\|\})$ by Proposition~\ref{pr:big_power}, so $\ell\le N^2p(N)$ where $N$ is the total length of the input (i.e. the sum of word lengths of $f_1,$ \ldots, $f_m,$ $f$). Further, since the product in~\eqref{eq:norm_form} represents the trivial element, it can be reduced to $1$ by a series of eliminations of trivial syllables and combining like terms:
\begin{align*}
g_1h_1g_2h_2\cdots g_\ell h_\ell&=g_1^{(1)}h_1^{(1)}\cdots g_\ell^{(1)}h_\ell^{(1)}\\
&=g_1^{(2)}h_1^{(2)}\cdots g_{\ell-1}^{(2)}h_{\ell-1}^{(2)}\\
&=\ldots=g_1^{(\ell)}h_1^{(\ell)}=1,\\
\end{align*}
where the product labeled by $(j)$ is obtained from the one labeled by $(j-1)$ by a single elimination of a trivial term and combining the two (cyclically) neighboring terms. Observe that each $g_i^{(j)}$ is (up to a cyclic shift) a product of the form $g_\alpha g_{\alpha+1}\cdots g_\beta$; similarly for $h_i^{(j)}$. Therefore, each $g_i^{(j)}$ is of the form
\begin{equation}\label{eq:g_ij}
g_i^{(j)}=d_{1j}^{\delta_{1j}} d_{2j}^{\delta_{2j}}\cdots d_{k_{ij},j}^{\delta_{k_{ij},j}},
\end{equation}
where each $d_{\mu j}$, $1\le \mu\le k_{ij}$ is either
\begin{itemize}
\item[(NS)] one of the syllables involved in Lemma~\ref{le:power}, or normal form of some  $f_i$ or $f_i^2$, or ``u'' part of~\eqref{eq:simple}, in which case $\delta_{\mu j}=1$, or
\item[(S)]\label{li:simple} the syllable $f'$ in~\eqref{eq:simple} for some $f_\nu$, in which case $\delta_{\mu j}=n_\nu$.
\end{itemize}
On the one hand, the total amount of syllables $d_{\mu j}$ involved in~\eqref{eq:g_ij} for a fixed $j$ is $k_{1j}+k_{2j}+\cdots +k_{\ell-j+1,j}$. On the other hand, it cannot exceed $k_{11}+k_{21}+\cdots +k_{\ell 1}$
since $g_1^{(j)}$, $g_2^{(j)},$ \ldots, $g_{\ell-j+1}^{(j)}$ are obtained by eliminating and combining elements $g_1$, $g_2$, \ldots, $g_l$. Taking into account that each $k_{i1}\le m+1\le N$, we obtain $k_{1j}+k_{2j}+\cdots +k_{\ell-j+1,j}\le \ell N\le N^3p(N)$.


Now, given the equality $g_i^{(j)}=1$ for some $i,j$, if the option~(S) takes place for any $1\le \mu\le k_{ij}$, then the right hand side of the corresponding equality~\eqref{eq:g_ij} can be represented (via a standard conjugation procedure) as a positive instance of the $\KP(G)$ with input of length bounded by $(N k_{ij})^2\le N^8p^2(N)$. Since $G$ is a {\pbk} group, we may assume that every $n_\nu$ that occurs as some $\delta_{\mu j}$ in some $g_i^{(j)}$ is bounded by a polynomial $p_G(N)$. Similar argument takes place for the $H$-syllables $h_i^{(j)}$, resulting in a polynomial bound $p_H(N)$.

It is only left to note that since for every $1\le \nu\le m$, either $f_\nu$ is non-simple and then $n_\nu$ is bounded by $p(N)$, or it is simple and then $n_\nu$ is bounded by $p_G(N)$ or $p_H(N)$, so every $n_\nu$ is bounded by $p(N)+p_G(N)+p_H(N)$.
\end{proof}

\begin{theorem}\label{th:ptime_kp}
If $G,H$ are {\pbk} groups such that $\AGP(G)$, $\AGP(H)\in\P$ then $\KP(G*H)\in\P$.
\end{theorem}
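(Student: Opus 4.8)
The plan is to assemble three facts already in hand: for a \pbk{} group the unbounded knapsack problem reduces to its bounded version, the bounded version reduces to $\AGP$, and $\AGP(G*H)$ is polynomial time under the present hypotheses. This mirrors the reduction of $\KP$ to $\BKP$ to $\AGP$ carried out for hyperbolic groups in~\cite{Miasnikov-Nikolaev-Ushakov:2014a}.

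First I would invoke Proposition~\ref{pr:kp_to_bkp}: since $G$ and $H$ are \pbk{} groups, so is $G*H$, so there is a polynomial $q$ such that an instance $(g_1,\ldots,g_k,g)$ of $\KP(G*H)$ is positive if and only if $(g_1,\ldots,g_k,g,1^{q(N)})$ is a positive instance of $\BKP(G*H)$, where $N$ is the total length of $g_1,\ldots,g_k,g$. As $q$ is a fixed polynomial, the unary string $1^{q(N)}$ has length polynomial in $N$ and can be written down in polynomial time, so $(g_1,\ldots,g_k,g)\mapsto(g_1,\ldots,g_k,g,1^{q(N)})$ is a $\P$-time (Karp) reduction of $\KP(G*H)$ to $\BKP(G*H)$. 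Next, $G*H$ is finitely generated because $G$ and $H$ are, so by Proposition~\ref{pr:reduction_to_agp} the problem $\BKP(G*H)$ is $\P$-time reducible to $\AGP(G*H)$. Finally, applying Corollary~\ref{cor:ptime_agp} with $C$ the trivial group (so that $G*_CH=G*H$), the hypotheses $\AGP(G),\AGP(H)\in\P$ yield $\AGP(G*H)\in\P$. Composing the two reductions with this polynomial time algorithm decides $\KP(G*H)$ in polynomial time.

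The substantive work has already been done in the earlier statements, namely the big-powers/pigeonhole argument underlying Proposition~\ref{pr:big_power} and its consequence Proposition~\ref{pr:kp_to_bkp}, together with the Cook reduction of $\AGP$ over a finite amalgam in Theorem~\ref{th:agp_to_factors}; what is left here is bookkeeping. The only point that needs a moment's care is the one flagged above: the bound $q(N)$ provided by the \pbk{} property must be polynomial and encoded in unary, so that the passage from $\KP$ to $\BKP$ does not inflate the instance size beyond polynomial.
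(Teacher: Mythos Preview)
Your argument is correct and follows exactly the paper's own proof: reduce $\KP(G*H)$ to $\BKP(G*H)$ via Proposition~\ref{pr:kp_to_bkp}, then to $\AGP(G*H)$ via Proposition~\ref{pr:reduction_to_agp}, and conclude by Corollary~\ref{cor:ptime_agp}. The extra care you take with the unary encoding of $q(N)$ and the specialization $C=\{1\}$ is appropriate and merely makes explicit what the paper leaves implicit.
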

\begin{proof} By Proposition~\ref{pr:kp_to_bkp} $\KP(G*H)$ is $\P$-time reducible to $\BKP(G*H)$. In turn, the latter is $\P$-time reducible to $\AGP(G*H)$ by Proposition~\ref{pr:reduction_to_agp}. Finally, $\AGP(G*H)\in\P$ by Corollary~\ref{cor:ptime_agp}.
\end{proof}

\begin{corollary}
$\KP$ is polynomial time decidable in free products of finitely generated abelian and hyperbolic groups in any finite number.
\end{corollary}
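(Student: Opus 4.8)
The plan is to deduce the statement from Theorem~\ref{th:ptime_kp} by an induction on the number of free factors; the only point that will need a genuinely new argument is that the building blocks --- finitely generated abelian groups and hyperbolic groups --- are {\pbk} and have polynomial-time $\AGP$. For a hyperbolic group $G$ one has $\AGP(G)\in\P$ by Proposition~\ref{pr:agp_hyp}, and the {\pbk} property (together with $\KP(G)\in\P$) is precisely the polynomial bound on the exponents that underlies the reduction of $\KP$ to $\BKP$ for hyperbolic groups established in~\cite{Miasnikov-Nikolaev-Ushakov:2014a}. For a finitely generated abelian group $A$ one has $\AGP(A)\in\P$ by Proposition~\ref{pr:agp_nilp}, since $A$ is nilpotent; combined with the {\pbk} property and Proposition~\ref{pr:reduction_to_agp} this also gives $\KP(A)\in\P$. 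So the substantive task is to check that $A$ is {\pbk}.

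To this end, fix a decomposition $A=\mathbb Z^d\oplus\mathbb Z/m_1\oplus\cdots\oplus\mathbb Z/m_r$ and a finite generating set. An instance $(g_1,\dots,g_k,g)$ of $\KP(A)$, whose total length we call $N$, is positive precisely when there are integers $\varepsilon_j\ge 0$ with $\sum_j\varepsilon_jv_j=v$ in $\mathbb Z^d$ and $\sum_j\varepsilon_ja_{ij}\equiv a_i\pmod{m_i}$ for $i=1,\dots,r$, where the integer vectors $v_j,v$ and the integers $a_{ij},a_i$ have all entries bounded by $N$ in absolute value, while $d$, $r$, and the moduli $m_i$ are constants depending only on $A$. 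Rewriting each congruence as $\sum_j\varepsilon_ja_{ij}-m_iy_i^++m_iy_i^-=a_i$ with $y_i^+,y_i^-\ge 0$ turns this into the question of solvability over $\mathbb Z_{\ge 0}$ of a linear system $Mx=b$ with $d+r$ equations, $k+2r$ unknowns, and all entries of $M,b$ of magnitude $O(N)$. By the classical bounds on solutions of integer linear programs (Papadimitriou; von zur Gathen and Sieveking), a solvable system of this form has a nonnegative integer solution all of whose coordinates are at most $(k+2r)\bigl((d+r)\cdot O(N)\bigr)^{2(d+r)+1}$; as $d$ and $r$ are fixed, this is bounded by a polynomial $q(N)$. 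In particular each $\varepsilon_j\le q(N)$, and since the reverse implication is trivial, $(g_1,\dots,g_k,g)$ is a positive instance of $\KP(A)$ if and only if $(g_1,\dots,g_k,g,q(N))$ is a positive instance of $\BKP(A)$, i.e. $A$ is {\pbk}.

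Now I assemble the pieces. A first induction on $n$, using Proposition~\ref{pr:kp_to_bkp} for the {\pbk} property and Corollary~\ref{cor:ptime_agp} (with trivial amalgamated subgroup, so that $G_1*_{\{1\}}G_2=G_1*G_2$) for $\AGP$, shows that a finite free product of groups each of which is {\pbk} with polynomial-time $\AGP$ is again {\pbk} with polynomial-time $\AGP$. Let now $G_1,\dots,G_n$ be finitely generated abelian or hyperbolic groups; by the first two paragraphs each $G_i$ is {\pbk} with $\AGP(G_i)\in\P$. If $n=1$ we already noted $\KP(G_1)\in\P$. If $n\ge 2$, write $G_1*\cdots*G_n=G_1*K$ with $K=G_2*\cdots*G_n$; by the previous sentence $K$ is {\pbk} with $\AGP(K)\in\P$, and likewise for $G_1$, so Theorem~\ref{th:ptime_kp} yields $\KP(G_1*\cdots*G_n)=\KP(G_1*K)\in\P$.

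I expect the only real obstacle to be the {\pbk} property for finitely generated abelian groups, and within that the bookkeeping that keeps the bound polynomial: the exponent $2(d+r)+1$ must remain constant, which is why it is essential to view the torsion summands as contributing a fixed number of congruences of \emph{bounded} modulus rather than as additional unbounded data. Everything else is a formal combination of Theorem~\ref{th:ptime_kp}, Proposition~\ref{pr:kp_to_bkp}, Corollary~\ref{cor:ptime_agp}, Propositions~\ref{pr:agp_nilp} and~\ref{pr:agp_hyp}, and the hyperbolic case of~\cite{Miasnikov-Nikolaev-Ushakov:2014a}.
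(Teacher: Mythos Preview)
Your proof is correct and follows essentially the same approach as the paper: invoke Theorem~\ref{th:ptime_kp} after verifying that hyperbolic and finitely generated abelian groups are {\pbk} with $\AGP\in\P$, then induct on the number of free factors via Proposition~\ref{pr:kp_to_bkp} and Corollary~\ref{cor:ptime_agp}. The only differences are that the paper leaves the induction implicit and cites Borosh--Treybig directly for the abelian {\pbk} property, whereas you spell out the integer-programming bound (Papadimitriou / von zur Gathen--Sieveking) explicitly; these are the same argument in content, and your added detail on why the exponent $2(d+r)+1$ stays constant is a useful clarification.
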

\begin{proof}
By~\cite[Theorem 6.7]{Miasnikov-Nikolaev-Ushakov:2014a}, hyperbolic groups are {\pbk} groups. By~\cite{Borosh-Treybig}, so are finitely generated abelian groups. The statement follows by Theorem~\ref{th:ptime_kp}.
\end{proof}

The authors are grateful to A.~Myasnikov for bringing their attention to the problem and for insightful advice and discussions, and to the anonymous referees for their astute observations and helpful suggestions.
\bibliography{main_bibliography}

\end{document}